%%%%%%%%%%%%%%%%%%%%%%%%%%%%%%%%%%%%%%%%%%%%%%%%%%%%%%%%%%%%%%%%%%%%%%%%%%%%%%%%%%
%
%
%
%
%%%%%%%%%%%%%%%%%%%%%%%%%%%%%%%%%%%%%%%%%%%%%%%%%%%%%%%%%%%%%%%%%%%%%%%%%%%%%%%%%%

\documentclass[a4paper,11pt]{article}

\usepackage[utf8]{inputenc}

\topmargin -0.5in \textheight 9in
\oddsidemargin 0.1in   %more reasonable margins!  (This is 1.25; can use <0 too)
\evensidemargin 0.1in  %(same as above)
\textwidth 6.35in         %(could use also -.5,-.5,7.5 for instance)

\usepackage{amsthm}
\usepackage{amsmath}
\usepackage{amssymb}
\usepackage{amsfonts}
\usepackage{dsfont} 
\usepackage{hyperref}
\usepackage{upgreek}
\usepackage{comment}
\usepackage{xfrac}
\usepackage{tikz}
\usepackage{subfig}
\usepackage{appendix}

\usepackage{color}
\usepackage{float}

\usepackage{bm}
\usepackage{bbm}
\theoremstyle{plain}
\newtheorem{theorem}{Theorem}[section]

\theoremstyle{definition}
\newtheorem{Def}{Definition}[section]

\theoremstyle{plain}
\newtheorem{prop}{Proposition}[section]

\theoremstyle{plain}
\newtheorem{lemma}{Lemma}[section]

\theoremstyle{plain}

\theoremstyle{remark}
\newtheorem{remark}{{\it Remark}}

\theoremstyle{remark}

\newcommand{\R}{\mathbb{R}}
\newcommand{\N}{\mathbb{N}}

\newcommand{\nc}{h}

\newcommand{\bd}[1]{\begin{Def}\label{#1}}
\newcommand{\ed}{\end{Def}}
\newcommand{\bp}[1]{\begin{prop}\label{#1}}
\newcommand{\ep}{\end{prop}}
\newcommand{\bt}[1]{\begin{theorem}\label{#1}}
\newcommand{\et}{\end{theorem}}

\def\usigma{ \bm{\upvarsigma}}

\def\usigma{ \bm{\upvarsigma}}
\def\ugamma{ \bm{\upgamma}}
\def\ueta{ \bm{\upeta}}
\newcommand{\eex}{\end{example}}
\newcommand{\de}{\mathrm{d}}

%%%%%%%%%%%%%%%%%%%%%%%%%%%%%%%%%%%%%%%%%%%%%%%%%%%%%%%%%%%%%%%%%%%%%
\usepackage{fancyhdr}
\pagestyle{fancy}
\chead{\sc{Delay}}
\rhead{}\lhead{}

\title{Delay-induced periodic behavior in competitive populations}
\author{Michele Aleandri\footnote{Luiss University, Viale Romania, 32, 00197 Rome, Italy, \url{maleandri@luiss.it}},\quad Ida G.~Minelli\footnote{Dipartimento di Ingegneria e Scienze
        dell'Informazione e Matematica, Universit\`a degli Studi
        dell'Aquila, Via Vetoio (Coppito 1), 67100 L'Aquila, Italy,
        \url{idagermana.minelli@univaq.it}}}  
\date{}

\begin{document}
\maketitle
\abstract{We study a model of binary decisions in a fully connected network of interacting agents. Individual decisions are determined by social influence, coming from direct interactions with neighbours, and a group level pressure that accounts for social environment. 
In a competitive environment, the interplay of these two aspects results in the presence of a persistent disordered phase where no majority is formed.   
We sow how the introduction of a delay mechanism in the agent's detection of the global average choice may drastically change this scenario, giving rise to a coordinated self sustained periodic behaviour.  
}

\medskip
\noindent \textbf{Keywords.} Non linear voter models; Opinion dynamics; Scaling limits; Hopf bifurcation \\  
\smallskip
\noindent \textbf{MSC2010 Classification.} 60K35; 62P25; 91C15; 91D30;

\section{Introduction}
The study of dynamics of social systems through the language and tools of Statistical Physics has  generated growing interest in recent decades in various scientific communities, such as social scientists,  mathematicians, physicists and computer scientists. A social system can in fact be represented  as a multitude of individuals who can randomly change their state by interacting with each other or being influenced by external constraints and inputs.  
Despite their simplicity when compared to the complexity of human social interactions, Statistical Physics models are able to capture several aspects which are often observed in social communities, such as long-term correlations, scaling laws, or the passage, depending on the values of the parameters involved, from a disordered phase in which agents' decisions are unpredictable,
 to a phase in which they coordinate and generate a collective self-organized behaviour: in particular,  phenomena such as synchronization or periodic motions may appear (see, e.g., \cite{castellano2009statistical} \cite{WeidlichHaagSociology},  \cite{schweitzer2003brownian},   \cite{crimaldi2019synchro}, \cite{giacomin2015noise}). This feature is a central issue in the study of social dynamics, whose aim is to understand how the structure of social networks, the nature of interactions and the diverse social responses affect the macroscopic behaviour of the system. 
In this paper we consider a non linear voter model with mean field interaction,  which represents a population of  ''contrarians'' who are subject to a positive social influence exerted by their neighbours. 
More precisely, the agents operate in a competitive environment, so that they are interested in choosing actions that go in the opposite direction to that of the majority, but, at the same time, their behaviour is influenced by interactions with their neighbours and a sort of flock effect pushes them to imitate the agents with whom they have a direct interaction. 
As an example linked to current events, we may think to epidemiological models, where susceptible individuals may face dichotomic behavioural choices, so that,
in order to avoid getting infected, they try to reduce contacts  by preferring choices contrary to that of the majority. At the same time, they may adopt irrational behaviours due to social influence or to an imitation mechanism, adapting their choices to those of their neighbours \cite{covid19} \cite{verelst2016behav}.\\
A similar situation may occur in the context of opinion dynamics or behavioral economics (e.g., \cite{aleandri2019opinion}, \cite{touboul2019hipster}, \cite{grabisch2019model}, \cite{galam2004contrarian}, \cite{galambook}). 
We shall compare this model to the one where agents are cooperative and a fast convergence to consensus occurs. 
Also, we are interested in how perturbations of different types may change the large scale picture of the system as the number of agents grows to infinity, giving rise to periodic self-organized behaviours of the agents.  
This type of phenomenon has already been described in many models of spin systems with mean-field interaction where some kind of frustration is present in the agents' attitude; 
for example, in \cite{regoli13}, \cite{DAIPRA2018},  \cite{AndreisTovazzi},  a dissipation term (that, in absence of interaction, leads the system to a neutral condition where no action is preferred over the other)  
is added in the evolution of Curie-Weiss-like models.\\ 
 In  \cite{collet2016rhythmic} and \cite{touboul2019hipster}  the authors show that periodicity may be triggered when the information about the prevailing choices in the system reaches each agent with a certain amount of delay.  
Notice that the delay hypothesis, besides  being interesting  from a mathematical point of view, is quite natural in a context of social interactions and makes the models more realistic, since usually a single agent does not know in real time what the overall state of the system is.\\  
In particular, in \cite{collet2016rhythmic}, an analysis of the role of delay in the emergence of large scale periodic behaviors for an Ising type two-population model has been done.
Following their approach, we will modify our model by introducing a delay mechanism in the system's evolution and we will show that for the modified 
model a phase transition occurs: depending on the parameters that characterize  the delay,  the mascoscopic system undergoes a Hopf bifurcation and a 
stable limit cycle appears in the dynamics. \\
Going into more detail, we consider a continuous time Markov process representing a population of $N$ agents who may  assume two possible states, 0 or 1, representing two possible actions (or opinions). Allowed transitions are the ones where agents change their state one at a time. For $h\in \{0,1\}$, 
the rate at which each agent switches from state $1-h$ to state $h$ is given by $m_h\phi(m_h)$, where $m_h$ is the fraction of agents that are currently in state $h$ in the population. The function $\phi$ is assumed to be positive and it can be interpreted as the effect of a group pressure, i.e., it  measures the degree to which agents conform or oppose to majority. Indeed, if $\phi$ is increasing, we get  a cooperative (or conformist) population, while if $\phi$ is decreasing, members of the population are competitive (or contrarians). 
Our focus is on the competitive case, for which, while the microscopic system converges a.s. to ''consensus'' (i.e., to one of the two absorbing states, the ''all 0's'' and ''all 1's'' configurations), a stable equilibrium point, representing a mixed phase where both actions coexist in the community, appears in the large scale dynamics as $N\to \infty$. 
Actually, we will show that, with high probability, the time spent by the microscopic system close to such mixed phase is at least exponential in the system's size (from which it follows that the mean absorption time is at least exponential too). \\
We then modify the model and add a delay term in the dynamics by replacing $\phi(m_h)$ in the transition rate defined above with $\phi\Big(\int_0^t K(t-s)m_h(s)\ ds \Big)$, where $m_h(s)$ is 
the fraction of agents that are in state $h$ at time $s$ and $K(t)$ is a  
Gamma distribution delay kernel (also known as \emph{Erlang} kernel), i.e., 
$K(t)=\frac{k^{n+1}}{n!}t^n e^{-kt}$ with $ k>0$ and $n\in\N$. This specific choice of the kernel, besides being effective for application purposes, makes the model more tractable from a mathematical point of view.  For this model in the case when $\phi$ is a strictly decreasing function, using the coefficient $k$ as a bifurcation parameter, we show that, for large $N$, we may still have a  stable equilibrium corresponding to a disordered phase, but  
there exists a critical value of $k$ such that the system loses its stability giving rise to macroscopic self sustained oscillations. 
\section{A model of social interactions under peer-pressure} \label{section2}
\subsection{Definition, macroscopic dynamics and stability }
We consider a family of $N$ agents indexed with an integer $i$ from $1$ to $N$; they can choose two different actions (or  two different opinions) in the set $\{0,1\}$. We denote by $\sigma_i^N\in\{0,1\}$  the state of agent $i$ and  by  $\sigma^N:=(\sigma_i^N)_{i\geq1}$ be the configuration of the whole family.  Each agent, after an exponentially distributed time of parameter $1$, selects at random one agent in the population and adopts its state with a probability which depends on a function $\phi$ of the magnetization  $m^N=\frac{1}{N}\sum_{i=1}^N\sigma_i^N$, that represents a measure of the peer pressure felt by each agent.  
We assume that $\phi$ is a strictly positive $\mathcal{C}^1$ function. 
 The dynamics is described by a  $\{0,1\}^N$ valued continuous time Markov process $\{\usigma^N(t)\}_{t\geq0}=\{\big(\usigma_1^N(t),\ldots,\usigma_N^N(t)\big)\}_{t\geq0}$,  defined on a probability space $(\Omega,\mathcal{F},\{\mathcal{F}_t\},P)$,  
with transition rates for the $i^{th}$ component $\sigma_i^N$ given by:
\begin{equation}
 \label{rates1family}
c(i,\sigma^N)=\left\{\begin{array}{ll}
m^N\phi(m^N) & \mbox{if } \sigma_i^N=0,\\
(1-m^N)\phi(1-m^N) &  \mbox{if } \sigma_i^N=1.
\end{array} \right.
\end{equation} 
The generator of the process is therefore given by 
\begin{equation}\label{GeneratorMicroD}
\mathcal{L}_Nf\big(\sigma^N\big)=\sum_{i=1}^Nc(i,\sigma^N)[f(\sigma^{N,i})-f(\sigma^N)]
\end{equation}
where $f:\{0,1\}^N\to\mathbb{R}$ and $\sigma^{N,i}$ denotes the configuration obtained by 
$\sigma^N$ by replacing $\sigma_i^N$ with $1-\sigma_i^N$. 
We use the bold notation $\bm{m}^N:=\{\bm{m}^N(t)\}_{t\geq0}$ to denote the Markov process defined by $\bm{m}^N(t)=\frac{1}{N}\sum_{i=1}^N\usigma_i^N(t)$.  \\
As $N$ goes to infinity, assuming that a law large number holds, we expect the process $\big\{\bm{m}^N\ ; N>1\big\}$ to converge in distribution towards a deterministic process $m:= \{m(t)\}_{t\geq 0}$ which solves the  following equation 
\begin{equation}\label{macro one}
\dot{m} = m(1-m)\big[\phi(m)-\phi(1-m)\big].
\end{equation}
 
We have indeed the following result: 
\begin{prop}\label{prop: Kurtz one}	
Suppose there exists a non-random $\bar{m}\in[0,1]$ such that, for every $\epsilon>0$, 
\begin{equation*}
\lim_{N\to+\infty}P\big(|\bm{m}^N(0)-\bar{m}|>\epsilon\big)=0.
\end{equation*} Then, as $N\to +\infty$,  the sequence of Markov processes $\big\{ \bm{m}^N\ ; N>1\big\}$ converges in distribution, with respect to the Skorohod topology, to the unique solution of  equation (\ref{macro one}) with $m(0)=\bar{m}$.
\end{prop}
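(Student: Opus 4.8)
The plan is to read the proposition as a fluid-limit (law of large numbers) statement for a density-dependent Markov chain and to prove it either by invoking Kurtz's theorem directly or, more transparently, through a Dynkin martingale decomposition combined with Gronwall's inequality. The first observation is that, although $\usigma^N$ lives on $\{0,1\}^N$, the magnetization $\bm{m}^N$ is itself a Markov process: from the rates (\ref{rates1family}), the total rate at which $\bm{m}^N$ jumps by $+1/N$ (an agent switches $0\to 1$) is $N(1-\bm{m}^N)\bm{m}^N\phi(\bm{m}^N)$, and the rate of a $-1/N$ jump is $N\bm{m}^N(1-\bm{m}^N)\phi(1-\bm{m}^N)$, both functions of $\bm{m}^N$ alone. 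Applying the generator (\ref{GeneratorMicroD}) to $f(\sigma^N)=m^N$ then yields
\begin{equation*}
\mathcal{L}_N m^N = \bm{m}^N(1-\bm{m}^N)\big[\phi(\bm{m}^N)-\phi(1-\bm{m}^N)\big] =: b(\bm{m}^N),
\end{equation*}
so the drift coincides with the right-hand side of (\ref{macro one}). Since $\phi\in\mathcal{C}^1$, the field $b$ is $\mathcal{C}^1$ on the compact interval $[0,1]$, hence Lipschitz, and $b(0)=b(1)=0$; this guarantees that (\ref{macro one}) with $m(0)=\bar m$ has a unique solution, which remains in $[0,1]$ for all times.

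Next I would write the Dynkin decomposition
\begin{equation*}
\bm{m}^N(t)=\bm{m}^N(0)+\int_0^t b(\bm{m}^N(s))\,\de s + M^N(t),
\end{equation*}
where $M^N$ is a martingale, and control its fluctuations through the predictable quadratic variation. Because each jump has size $1/N$ and the total jump rate is $N\,\bm{m}^N(1-\bm{m}^N)[\phi(\bm{m}^N)+\phi(1-\bm{m}^N)]$, one gets
\begin{equation*}
\langle M^N\rangle_t=\frac{1}{N}\int_0^t \bm{m}^N(s)(1-\bm{m}^N(s))\big[\phi(\bm{m}^N(s))+\phi(1-\bm{m}^N(s))\big]\,\de s.
\end{equation*}
Using $x(1-x)\le 1/4$ and the boundedness of $\phi$ on $[0,1]$ (say by $C$), this is at most $Ct/(2N)$, so by Doob's inequality $\E\big[\sup_{t\le T}|M^N(t)|^2\big]\le 4\,\E[\langle M^N\rangle_T]\le 2CT/N\to 0$. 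Hence the martingale term is negligible, uniformly on $[0,T]$.

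Finally, subtracting the integral form of (\ref{macro one}) and using the Lipschitz bound $|b(x)-b(y)|\le L|x-y|$ on $[0,1]$, I would obtain
\begin{equation*}
\sup_{s\le t}|\bm{m}^N(s)-m(s)|\le |\bm{m}^N(0)-\bar m| + \sup_{s\le T}|M^N(s)| + L\int_0^t \sup_{u\le s}|\bm{m}^N(u)-m(u)|\,\de s,
\end{equation*}
and Gronwall's inequality then yields $\sup_{t\le T}|\bm{m}^N(t)-m(t)|\le \big(|\bm{m}^N(0)-\bar m|+\sup_{t\le T}|M^N(t)|\big)e^{LT}$. The right-hand side tends to $0$ in probability, the first term by hypothesis and the second by the $L^2$ estimate above, so $\bm{m}^N\to m$ uniformly on compacts in probability, which in particular gives convergence in distribution in the Skorohod topology to the deterministic limit $m$. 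The only genuinely delicate points are bookkeeping: verifying that $\bm{m}^N$ stays in $[0,1]$ so that the uniform bound on $\phi$ and the Lipschitz constant of $b$ apply, and confirming that the rate functions are bounded enough for the quadratic-variation estimate. Both are immediate here because the state space is compact and $\phi$ is continuous, so I expect no substantive obstacle; this is exactly the setting of Kurtz's theorem for density-dependent families, and citing that theorem after identifying the jump rates above gives an equally valid, shorter route.
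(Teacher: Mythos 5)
Your argument is correct, but it follows a different route from the paper. The paper's proof works at the level of generators: it writes the generator $\mathcal{G}_N$ of the birth--death chain $\bm{m}^N$, checks that $\sup_{x\in E^N}|\mathcal{G}_Nf(x)-\mathcal{G}f(x)|\to 0$ for $f\in\mathcal{C}^1([0,1])$, where $\mathcal{G}$ generates the deterministic flow of (\ref{macro one}), and then invokes the semigroup-convergence theorems of Ethier and Kurtz (Ch.~3, Corollary~7.4 and Ch.~4, Theorem~8.10). You instead use the Dynkin martingale decomposition, bound the predictable quadratic variation by $Ct/(2N)$ using the $1/N$ jump size and the boundedness of $\phi$ on $[0,1]$, kill the martingale term via Doob's inequality, and close the estimate with the Lipschitz property of the drift $b$ and Gronwall's lemma; all the individual steps (the identification of the jump rates of $\bm{m}^N$, the drift computation, the quadratic-variation formula, and the passage from uniform convergence in probability on compacts to Skorohod convergence toward a continuous deterministic limit) are sound. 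What your approach buys is a self-contained, quantitative proof — an explicit $O(N^{-1/2})$ fluctuation bound and the Gronwall constant $e^{LT}$ — which is essentially the content of the Kurtz-type estimate (Theorem~\ref{teo:kurtz}) that the paper later needs anyway in the proof of Theorem~\ref{teo: time stable}; what the paper's generator route buys is brevity and a template that transfers verbatim to the higher-dimensional delayed system of Section~\ref{section:delay}, where writing out the martingale bookkeeping by hand would be more cumbersome. Your closing remark that one could alternatively just cite Kurtz's theorem for density-dependent families is also accurate and is closest in spirit to what the authors actually do.
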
  
\begin{proof}
Let $E^N=\{x\in [0,1]: x=\frac{j}{N},  0\leq j\leq N \}$ and  $\mathcal{L}_N$ be the generator defined in \eqref{GeneratorMicroD}. For $f:E^N\to\mathbb{R}$ we can write 
$f(m^N)=(f\circ g) (\sigma^N)$ with $g:\{0,1\}^N\to E^N$ given by $g(\sigma^N)=\frac{1}{N}\sum_{i=1}^N \sigma^N_i$. Then, we have $\mathcal{L}_N(f\circ g)(\sigma^N)=\mathcal{G}_N f(m^N)$, 
where
\begin{eqnarray*}
\mathcal{G}_Nf(x)&=&  Nx(1-x)\phi\left (x\right ) \left [ f\left (x+\frac{1}{N}\right )-f(x)
\right ] \\
&+& Nx(1-x)\phi\left(1-x\right ) 
 \left [ f
\left (x-\frac{1}{N}\right )-f(x)\right ].
\end{eqnarray*}
Denoting by $\mathcal{G}$ the generator of the semigroup associated to the evolution (\ref{macro one}), 
 by a simple computation we get
\begin{equation*}
\lim_{N\to+\infty}\sup_{x\in E^N}|\mathcal{G}_Nf(x)-\mathcal{G}f(x)|=0.
\end{equation*}
for any  $f\in\mathcal{C}^1([0,1])$.  
Then, by applying standard results on convergence of Markov processes (see, e.g., \cite{EthierKurtz}, Ch.\ 3, Corollary 7.4 and Ch.\ 4, Theorem 8.10) we obtain the desired result.
\end{proof}
We can observe that for any fixed $N$, by elementary theory of Markov processes,  the microscopic variable $\mathbf{m}^N$ has two absorbing states, $0$  and $1$, and it reaches one of them in finite time with probability one. On the other hand, depending on the solutions of equation $\phi(m)-\phi(1-m)$ in $[0,1]$, new equilibrium points, other that $0$ and $1$,  may appear in the macroscopic scale. 
Moreover, the stability of equilibria depends on the value of the derivative of  $\phi$ in such points and it may happen that the points $0$ and $1$ are unstable.	\\
For example, taking $\phi$ a linear function
\begin{equation*}
\phi(x)=ax+b
\end{equation*}
with $a\in\mathbb{R}$ and $b>\max\{-a,0\}$ ,  the interpretation of the role of $\phi$ in the dynamics is clearly determined by its derivative. If $a>0$, the  function is strictly increasing, so that  
the probability that one agent adopts a given opinion is proportional to how much that opinion is widespread in the population; we say in this case that agents have a \emph{cooperative} behaviour and. If $a<0$, the function is strictly decreasing and agents have a  \emph{competitive} behaviour, i.e., they have a bigger probability to adopt an opinion opposite to that of the majority. 
This behaviour is reflected in the stability  of the equilibrium points of the ODE \eqref{macro one}, see Figure \ref{fig: stability one fam}; if $a>0$ the  points $0$ and $1$ are asymptotically stable and the point $0.5$ is unstable while for $a<0$ the stability is exchanged.

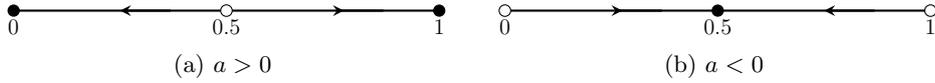
\begin{figure}[h!]
\centering
\subfloat[][$a>0$]{
\begin{tikzpicture}[shorten >=1pt, auto, scale=0.7 , transform shape,>=stealth]

\draw[thick](0,0) -- (8,0) node[anchor=north west] {};

\draw[] (0 cm,1pt) -- (0 cm,-1pt) node[anchor=north] {\large $0$};
\draw (4 cm,1pt) -- (4 cm,-1pt) node[anchor=north] {\large $0.5$};
\draw (8 cm,1pt) -- (8 cm,-1pt) node[anchor=north] {\large $1$};
\filldraw (0,0) circle (3pt);
\filldraw (8,0) circle (3pt);
\draw (4,0) node[circle, fill=white!100,draw,scale=0.6] {};
\draw [<-,thick] (2,0) -- (3,0);
\draw [>-,thick] (6,0) -- (7,0);

\end{tikzpicture}}
\quad
\subfloat[][$a<0$]{
\begin{tikzpicture}[shorten >=1pt, auto, scale=0.7 , transform shape,>=stealth]

\draw[thick](0,0) -- (8,0) node[anchor=north west] {};

\draw[] (0 cm,1pt) -- (0 cm,-1pt) node[anchor=north] {\large $0$};
\draw (4 cm,1pt) -- (4 cm,-1pt) node[anchor=north] {\large $0.5$};
\draw (8 cm,1pt) -- (8 cm,-1pt) node[anchor=north] {\large $1$};
\filldraw (4,0) circle (3pt);
\draw (0,0) node[circle, fill=white!100,draw,scale=0.6] {};
\draw (8,0) node[circle, fill=white!100,draw,scale=0.6] {};

\draw [>-,thick] (2,0) -- (3,0);
\draw [<-,thick] (6,0) -- (7,0);

\end{tikzpicture}}
\caption{Stability of the equilibrium points $0$,$1/2$ and $1$ for $\phi$ linear. }\label{fig: stability one fam}
\end{figure}

Notice that the particular choice of $a=0$ leads to the classical mean field voter model, so that the ODE \eqref{macro one} becomes $\dot{m}=0$ and the macroscopic system is frozen at its initial state. \\
An easy calculation shows that if $\phi$ is either strictly increasing or strictly decreasing we get the same phase diagram as for the linear case with $a>0$ and $a<0$ respectively. Then we shall refer to a \emph{cooperative} (respectively, \emph{competitive}) population if the gossip function is strictly increasing (respectively, decreasing). \\ 
Let us consider now the dynamics of the ''limiting particle''  $\bm{\upvarsigma}$, evolving through a time-inhomogeneous $\{0,1\}$-valued Markov process with jump rates
\begin{equation}\label{macrosigmaone}
c(i,\sigma, t)=\left\{\begin{array}{ll}
m(t)\phi(m(t)) & \mbox{if } \sigma_i=0,\\
(1-m(t))\phi(1-m(t)) &  \mbox{if } \sigma_i=1, 
\end{array} \right.
\end{equation}
where $\big(m(t)\big)$ is the solution to the differential equation \eqref{macro one}. Next proposition states that \emph{propagation of chaos} holds, i.e., as $N\to \infty$, particles behave independently according to the evolution of $\bm{\upvarsigma}$ defined in (\ref{macrosigmaone}) and  (\ref{macro one}). \\
More precisely, we recall that, given a polish space $(E, \rho)$,  for a sequence of  random vectors $\{X^N; N\geq 1\}$, where $X^N=(X^N_1, \ldots, X^N_N)\in E^N$  has a permutation invariant distribution for any $N$, 
we say that \emph{propagation of chaos holds} (or, equivalently, that the sequence is $\mu$-\emph{chaotic}),  if there exists a probability measure $\mu$ on $E$ such that, for any fixed $\nc\in \N$, we have 
$$(X^N_1, \ldots, X^N_{\nc})\stackrel{d}{\longrightarrow} (Y^N_1,\ldots, Y^N_{\nc})\quad \mbox{as } N\rightarrow +\infty,$$
where $Y^N_1,\ldots, Y^N_{\nc}$ are i.i.d. with common distribution $\mu$. \\
\begin{prop}\label{prop chaos one 0}
Let us fix $T>0$. For any $N>1$, let $\usigma^{N, T}=\{(\usigma^N_{1}(t),\ldots, \usigma^N_{N}(t)) \}_{t\in [0,T]}$ be the Markov process with generator $\mathcal{L}_N$ defined in \eqref{rates1family} and \eqref{GeneratorMicroD}.  Denote by $\mu_{[0,T]}$  the distribution of the Markov processes  $\usigma^T= \{\usigma(t)\}_{t\in [0,T]}$ with transition rates defined by \eqref{macrosigmaone} and \eqref{macro one} and by $\mu_0$ be the distribution of $\usigma(0)$.  Assume that $\usigma^{N, T}(0)$ has a permutation invariant distribution and  $\lim_{N\to\infty}E\big[ |\usigma_{i}^{N, T}(0)-\bar{\usigma}_{i}^N(0)|\big] =0$ for any $i$, where $\{\bar{\usigma}_{i}^N(0); i\geq 1\}$ are i.i.d. with common distribution $\mu_0$. Then, the sequence of stochastic processes $\{\usigma^{N, T}; N\geq 1\}$ is $\mu_{[0,T]}$-chaotic.\\
\end{prop}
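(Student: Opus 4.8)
The plan is to prove propagation of chaos by a coupling argument, constructing on a common probability space the interacting system $\usigma^{N,T}$ and an auxiliary i.i.d. family of ''limiting particles'' driven by the same Poisson clocks, and then showing that the expected number of discrepancies between the two systems vanishes as $N\to\infty$. Since the limiting particles defined by \eqref{macrosigmaone} are independent (each evolves as a time-inhomogeneous Markov chain driven only by the deterministic magnetization $m(t)$), controlling the discrepancy reduces to a law of large numbers for the empirical magnetization, which is exactly the content of Proposition \ref{prop: Kurtz one}.

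\begin{proof}[Proof sketch]
First I would set up a graphical construction. To each agent $i$ I attach an independent rate-$1$ Poisson clock with arrival times $(\tau^i_k)_k$, and at each arrival an independent uniform random variable $U^i_k$ on $[0,1]$. The interacting process $\usigma^{N,T}$ is built by letting agent $i$, at a clock ring, flip according to the rates \eqref{rates1family} evaluated at the \emph{current} empirical magnetization $\bm{m}^N$; the same clocks and uniforms drive an i.i.d. family $\bar{\usigma}^N=\{\bar{\usigma}^N_i\}$ in which agent $i$ flips according to the rates \eqref{macrosigmaone} evaluated at the \emph{deterministic} $m(t)$ solving \eqref{macro one}, with $\bar{\usigma}^N_i(0)$ i.i.d.\ of law $\mu_0$. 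By construction each $\bar{\usigma}^N_i$ is an independent copy of $\usigma$, so the family $\{\bar{\usigma}^N_i\}_i$ has exactly the product law whose finite-dimensional marginals are $\mu_{[0,T]}$-chaotic; it therefore suffices to show that the coupled systems stay close.

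Next I would estimate the expected discrepancy $p^N(t):=E\big[|\usigma^N_i(t)-\bar{\usigma}^N_i(t)|\big]$, which by exchangeability does not depend on $i$. A discrepancy at agent $i$ can be created or destroyed only at a ring of clock $i$, and at such a ring the two processes make different decisions only when the uniform $U^i_k$ falls between the two competing rates; writing Duhamel/Gr\"onwall-type bounds, the growth of $p^N(t)$ is controlled by two terms: the initial discrepancy $E\big[|\usigma^N_i(0)-\bar{\usigma}^N_i(0)|\big]$, which tends to $0$ by hypothesis, and a term proportional to $E\big[|\bm{m}^N(s)-m(s)|\big]$ coming from the mismatch of the rate arguments, using that $\phi\in\mathcal{C}^1$ and the maps $x\mapsto x\phi(x)$, $x\mapsto(1-x)\phi(1-x)$ are Lipschitz on the compact $[0,1]$. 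Since $\bm{m}^N(s)=\frac1N\sum_i\usigma^N_i(s)$ and $m(s)=E[\bar{\usigma}^N_i(s)]$, one splits $|\bm{m}^N(s)-m(s)|$ into a mean-field fluctuation of the i.i.d.\ copies (which is $O(N^{-1/2})$ by a variance bound, exploiting independence of the $\bar{\usigma}^N_i$) plus the average discrepancy, giving a closed Gr\"onwall inequality $p^N(t)\le C\int_0^t p^N(s)\,ds + \varepsilon_N$ with $\varepsilon_N\to0$ uniformly on $[0,T]$.

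Applying Gr\"onwall's lemma then yields $\sup_{t\in[0,T]}p^N(t)\le \varepsilon_N e^{CT}\to 0$, so the coupled processes agree with high probability on any finite set of coordinates. Finally, convergence of $\{\usigma^{N,T};N\ge1\}$ to the $\mu_{[0,T]}$-chaotic family follows from a standard equivalence (e.g.\ Sznitman's characterization): for permutation-invariant laws, $\mu_{[0,T]}$-chaoticity is equivalent to convergence in distribution of the empirical measure to the constant $\mu_{[0,T]}$, and the vanishing pairwise discrepancy delivers precisely this. The main obstacle I anticipate is the fluctuation estimate $E\big[|\bm{m}^N(s)-m(s)|\big]\to0$ uniformly in $s\in[0,T]$: one must argue this carefully, either by invoking Proposition \ref{prop: Kurtz one} together with tightness to upgrade convergence in distribution to an $L^1$ bound on the compact state space, or by a direct second-moment computation controlling $\Var(\bm{m}^N(s))$ along the coupled dynamics. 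Once this uniform control is in hand, the remaining steps are routine.
\end{proof}
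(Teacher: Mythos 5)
Your proposal is correct and follows essentially the same route as the paper: the paper reduces this proposition to the delayed case (Proposition \ref{prop chaos one d}), whose proof is exactly your coupling --- the interacting system and an i.i.d.\ family of limiting particles driven by the same Poisson noise, a Gr\"onwall bound on the expected discrepancy, and a splitting of $|\bm{m}^N-m|$ into the average discrepancy plus an i.i.d.\ fluctuation term handled by the law of large numbers. The only cosmetic differences are that the paper phrases the coupling via Poisson random measures rather than clocks with uniform marks, and concludes via Wasserstein convergence of the $\nc$-particle marginals rather than Sznitman's empirical-measure criterion.
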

 The proof can be  obtained as an application of Proposition 3.2 in \cite{Andreis} and it turns out to be an easy consequence of the analogous result for the delayed case, which will be proved in Proposition \ref{prop chaos one d} below.\\
 In next sections, we will resctrict for simplicity to the case when $\phi$ is strictly monotone, but all the results can be extended to a general function $\phi$ with localization arguments.

\subsection{Time spent close to macroscopic stable equilibria}
We have seen that, depending on the choice of the function $\phi$, new stable equilibrium points may appear in the thermodynamic limit. This suggests to investigate, for $N$ large but finite, the amount of time the microscopic variable  $\bm{m}^N$ spends near these equilibria during its transient phase. In Theorem \ref{teo: time stable} below, we consider the case when $\phi$ is a strictly monotone function, but  the result (with the obvious adjustments) clearly holds for any $\phi$ which is strictly monotone in a neighborhood of each equilibrium point, provided that the initial condition is chosen appropriately. We recall that  when $\phi$ is strictly increasing the ODE \eqref{macro one} has two asymptotically stable solutions, $0$ and $1$, while when $\phi$ is strictly decreasing the ODE \eqref{macro one} has only one asymptotically stable solution, $1/2$. For the first case we show that, whenever the initial condition $\bar{m}$ is smaller than $1/2$ (respectively, larger than $1/2$) with high probability the microscopic process  gets trapped in a small neighborhood of $0$ (respectively, $1$). For the second case we show that the microscopic process reaches a small interval containing $1/2$ in a finite time and remains close to it for a time exponentially large in $N$.
 
\begin{theorem}\label{teo: time stable}
For every $\epsilon>0$ and $N$  sufficiently large, 
\begin{itemize}
\item[i)] if $\phi$ is strictly increasing, then $\forall\ \bar{m}<\frac{1}{2}$ there exist $C_\epsilon >0$ and $T_\epsilon >0$ such that 
\begin{equation*}
P_{\bar{m}}\Big( \sup_{t\geq T_\epsilon}\bm{m}^N(t)>\epsilon\Big)<5N e^{-C_\epsilon N};
\end{equation*}
and, $\forall\ \bar{m}>\frac{1}{2}$ there exist $C_\epsilon >0$ and $T_\epsilon >0$ such that  
\begin{equation*}
P_{\bar{m}}\Big( \inf_{t\geq T_\epsilon}\bm{m}^N(t)<1-\epsilon\Big)<5N e^{-C_\epsilon N};
\end{equation*}
\item[ii)]  if $\phi$ it strictly decreasing, then $\forall\ \bar{m}\in (0,1)$ there exist $C_\epsilon >0,\ \widehat{C}_\epsilon>0$ and $T_\epsilon >0$ such that 
\begin{equation*}
P_{\bar{m}}\Big( \sup_{t\in[ T_\epsilon, T_\epsilon+e^{\widehat{C}_\epsilon N}]}|\bm{m}^N(t)-\frac{1}{2}|>\epsilon\Big)<14 N^2e^{-C_\epsilon N}.
\end{equation*}
\end{itemize}
\end{theorem}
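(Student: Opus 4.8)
The plan is to exploit the fact that the magnetization $\bm m^N$ is itself a one–dimensional birth–death chain on $E^N=\{j/N\}$ and to reduce everything to classical gambler's‑ruin estimates. From the generator $\mathcal G_N$ computed in the proof of Proposition \ref{prop: Kurtz one}, the up‑ and down‑rates at state $j/N$ are
\[
\lambda_j=\frac{j(N-j)}{N}\,\phi(j/N),\qquad \mu_j=\frac{j(N-j)}{N}\,\phi(1-j/N),
\]
so that the decisive ratio is $\mu_j/\lambda_j=\phi(1-j/N)/\phi(j/N)$ and $0,N$ are absorbing. I would introduce the discrete scale function $s(j)=\sum_{k=0}^{j-1}\gamma_k$, with $\gamma_k=\prod_{i=1}^k \mu_i/\lambda_i$, together with the potential $\Psi(x)=\int_0^x\log\frac{\phi(1-u)}{\phi(u)}\,du$; since $\phi\in\mathcal C^1$ and $\phi>0$, a Riemann–sum estimate gives $\log\gamma_k=N\Psi(k/N)+O(1)$ uniformly in $k$. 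Strict monotonicity of $\phi$ fixes the shape of $\Psi$: for $\phi$ increasing, $\Psi$ is strictly increasing on $(0,1/2)$ with $\Psi(0)=0$; for $\phi$ decreasing, $\Psi$ has a strict minimum at $1/2$ and is symmetric about it (because $\Psi'(1/2+t)=-\Psi'(1/2-t)$). These are exactly the exponential barriers that drive the estimates.

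Each part splits into two steps. Step one (reaching the target equilibrium) is a quantitative law of large numbers. Writing the Dynkin martingale $\bm m^N(t)-\bar m-\int_0^t b(\bm m^N(s))\,ds$, with $b(x)=x(1-x)[\phi(x)-\phi(1-x)]$, whose jumps have size $1/N$ at total rate $O(N)$, an exponential (Azuma/Freidlin–Wentzell‑type) inequality combined with Grönwall yields, for the solution $m(\cdot)$ of \eqref{macro one} started at $\bar m$,
\[
P_{\bar m}\Big(\sup_{t\le T}|\bm m^N(t)-m(t)|>\delta\Big)\le 2\,e^{-cN\delta^2}.
\]
Since $m(t)\to 0$ (case i, $\bar m<1/2$) or $m(t)\to 1/2$ (case ii), one fixes $T_\epsilon$ so that $m(T_\epsilon)$ lies within $\epsilon/4$ of the target, and the bound places $\bm m^N(T_\epsilon)$ within $\epsilon/2$ of it off an event of probability $\le 2e^{-cN}$.

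For step two I would use the strong Markov property at $T_\epsilon$ together with the exact identity $P_x(\tau_b<\tau_a)=\frac{s(x)-s(a)}{s(b)-s(a)}$. In case (i), take $a=0$ and $b=\lceil N\epsilon\rceil$: because $0$ is absorbing, $P_x(\tau_b<\tau_0)$ is precisely the probability of \emph{ever} exceeding $\epsilon$, and the Laplace asymptotics for $s$ (each sum dominated by its largest term) give $P_x(\tau_b<\tau_0)\le CN\,e^{-N[\Psi(\epsilon)-\Psi(\epsilon/2)]}$ uniformly for $x\le N\epsilon/2$, which is the stated $\sup_{t\ge T_\epsilon}$ bound of the form $5N\,e^{-C_\epsilon N}$. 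In case (ii) the target $1/2$ is not absorbing, so I would control the exit from $(1/2-\epsilon,1/2+\epsilon)$ over the whole window by an excursion count: by the strong Markov property each excursion of $\bm m^N$ away from the centre reaches $1/2\pm\epsilon$ with probability at most $e^{-N\Delta_\epsilon}$, where $\Delta_\epsilon:=\Psi(1/2+\epsilon)-\Psi(1/2)>0$ (the two barriers are equal by the symmetry of $\Psi$); since the total jump rate is $O(N)$ on the neighbourhood, a Poisson tail bound shows that with overwhelming probability there are at most $O(Ne^{\widehat C_\epsilon N})$ excursions in a window of length $e^{\widehat C_\epsilon N}$. A union bound over these excursions and over the two sides yields the stated $14N^2 e^{-C_\epsilon N}$ as soon as $\widehat C_\epsilon<\Delta_\epsilon$ is chosen, with $C_\epsilon\sim\Delta_\epsilon-\widehat C_\epsilon>0$.

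The main obstacle is step two in case (ii): one must control not a single exit probability but the cumulative effect of exponentially many excursions inside a time window that is itself exponentially long. Thus the per‑excursion barrier estimate $e^{-N\Delta_\epsilon}$ must be made uniform in the (random) starting points near the centre, and it must be combined cleanly with the jump‑count bound so as not to erode the gap $\Delta_\epsilon-\widehat C_\epsilon$. Keeping the Laplace asymptotics for $s(b)-s(a)$ uniform, so that only the polynomial prefactors survive, is where the care lies; the quantitative law of large numbers of step one and the absorbing‑boundary case (i) are comparatively routine.
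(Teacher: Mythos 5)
Your proposal is correct and follows essentially the same route as the paper: a quantitative law of large numbers (the paper invokes Kurtz's theorem, Theorem \ref{teo:kurtz}) to bring $\bm{m}^N(T_\epsilon)$ into a neighbourhood of the target equilibrium, the Markov property at $T_\epsilon$, and then exponential barrier estimates for the birth--death chain whose up/down rate ratio is $\phi(x)/\phi(1-x)$. The only real difference is that where you derive the confinement bounds from scratch via scale functions, Laplace asymptotics and excursion counting, the paper imports them as ready-made lemmas from \cite{fagnani2016proc} (Lemmas \ref{lem: BD zero} and \ref{lem: BD unmezzo} in its appendix), whose content is exactly what you sketch; your only needed adjustment is that the per-excursion exponent in case (ii) should be the gap $\Psi(\tfrac12+\epsilon)-\Psi(\tfrac12+\tfrac{2}{3}\epsilon)$ between the arrival window and the exit level, not $\Psi(\tfrac12+\epsilon)-\Psi(\tfrac12)$.
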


\begin{proof}
Similarly to \cite{fagnani2016proc}, the argument of the proof uses Kurtz's Theorem and two auxiliary Lemmas, which are stated in the appendix. \\
We recall that $\{\bm{m}^N(t)\}_{t\geq 0}$ is a birth and death process with  values in $E_N=\{\frac{i}{N}: 0\leq i\leq N\}$ and with birth and death rates given respectively by $r^+(x)=Nx(1-x)\phi(x)$ and $ r^-(x)=Nx(1-x)\phi(1-x),\ x\in E_N$. 
Assume that $\phi$ is strictly decreasing and let us fix $\epsilon \in (0,\frac{1}{2})$ and 
 $N\geq  \frac{2}{\phi(\frac{1}{2})}$ so that $\mu:=\max_{x\in E_N}\big(r^+(x)+r^-(x)\big)\geq 1$.
 In this case, letting $\bar{\epsilon}=\frac{\epsilon}{3}$ we have, for any $x<\frac{1}{2}-\bar{\epsilon}$,   $$\frac{r^+(x)}{r^-(x)}=\frac{\phi(x)}{\phi(1-x)}\geq \frac{\phi(\frac{1}{2}-\bar{\epsilon})}{\phi(\frac{1}{2}+\bar{\epsilon})}\geq 1+\delta$$ 
and, for any $x>\frac{1}{2}+\bar{\epsilon}$
 $$\frac{r^-(x)}{r^+(x)}=\frac{\phi(1-x)}{\phi(x)}\geq \frac{\phi(\frac{1}{2}-\bar{\epsilon})}{\phi(\frac{1}{2}+\bar{\epsilon})}\geq 1+\delta$$ 
for a suitable $\delta >0$. 
 Then we can apply Lemma \ref{lem: BD unmezzo} with $x_0=\frac{1}{2}-2\bar{\epsilon}$ first to the case $\bm{X}(t)=\bm{m}^N(t)$ and then to the case $\bm{X}(t)=1-\bm{m}^N(t)$ to obtain respectively \\
\begin{eqnarray*}
P_x\left( \inf_{t\in [0, e^{\bar{\epsilon}C_p N}]}\bm{m}^N(t)<\frac{1}{2}-3\bar{\epsilon}\right ) < 10 N^2 e^{-\bar{\epsilon}C_p N} \quad \forall x> \frac{1}{2}-2\bar{\epsilon}\\
P_x\left( \sup_{t\in [0, e^{\bar{\epsilon}C_p N}]}\bm{m}^N(t)>\frac{1}{2}+3\bar{\epsilon}\right ) < 10 N^2 e^{-\bar{\epsilon}C_p N} \quad \forall x<\frac{1}{2}+2\bar{\epsilon}
\end{eqnarray*} 
 from which it follows that, for any $\epsilon\in (0, \frac{1}{2})$
\begin{equation}\label{D_lemma2}
P_x\left( \sup_{t\in \left[0, e^{\frac{\epsilon}{3} C_p N}\right]}\left|\bm{m}^N(t)-\frac{1}{2}\right|>\epsilon \right ) < 10 N^2 e^{-\frac{\epsilon}{3}C_p N} \quad \forall x\in I_\epsilon=\left(\frac{1}{2}-\frac{2}{3}\epsilon, \frac{1}{2}+\frac{2}{3}\epsilon\right).\\
\end{equation}
Observe that, for any $m(0)\in (0,1)$,  the solution of \eqref{macro one} converges to $1/2$ as $t\to +\infty$, then we can find $T_\epsilon$ such that 
$m(T_\epsilon)\in \left(\frac{1}{2}-\frac{\epsilon}{3}, \frac{1}{2}+\frac{\epsilon}{3} \right)$ and, letting $\bar{m}=m(0)$, we can write 
\begin{multline}\label{ineq}
P_{\bar{m}}\left( \sup_{t\in \left[T_\epsilon, T_\epsilon+ e^{\frac{\epsilon}{3}C_p N}\right]}\big|\bm{m}^N(t)-\frac{1}{2}\big|>\epsilon\right)\leq \\ P_{\bar{m}}\left( \sup_{t\in [T_\epsilon, T_\epsilon+e^{\frac{\epsilon}{3}C_p N}]}\big|\bm{m}^N(t)-\frac{1}{2}\big| >\epsilon\  \bigg|\  \bm{m}^N(T_\epsilon) \in I_\epsilon \right) +P_{\bar{m}}\Big(
\bm{m}^N(T_\epsilon) \notin I_\epsilon \Big).
\end{multline}
Applying Kurtz's Theorem \ref{teo:kurtz} we have $$P_{\bar{m}}(\bm{m}^N(T_\epsilon)\notin I_\epsilon)\leq P_{\bar{m}}\left(\sup_{t\in [0, T_\epsilon]}|\bm{m}^N(t)-m(t)|>\frac{\epsilon}{3}\right)<4 e^{-\left(\frac{\epsilon}{3}\right)^2 
C_{T_\epsilon}N}.$$ Then, letting $\widehat{C}_\epsilon=\frac{\epsilon}{3}C_p$ and  $C_\epsilon=\min\{\frac{\epsilon}{3}C_p,\  \left(\frac{e}{3}\right)^2 C_{T_\epsilon}\}$, 
 using \eqref{D_lemma2} and the Markov property for the first term of \eqref{ineq} 
 we get the desired result.\\
%$\leq 14 N^2 e^{- C_\epsilon N}$ 
Now, assume that $\phi$ is strictly increasing and fix $\epsilon\in (0,\frac{1}{2})$. Then, for any $x\in (0, \epsilon]$ we have $\frac{r^-(x)}{r^+(x)}=\frac{\phi(1-x)}{\phi(x)}>\frac{\phi(1-\epsilon)}{\phi(\epsilon)}=1+\delta$  for some $\delta>0$, and applying Lemma \ref{lem: BD zero} we get that, for any $x<\frac{\epsilon}{2}$, 
$$P_{x}\left(\sup_{t\geq 0} \bm{m}^N>\epsilon\right)<\frac{\epsilon}{2}e^{-\frac{\epsilon}{2}CN}.$$ 
Then, observing that, for any $m(0)< \frac{1}{2}$ we have $\lim_{t\to +\infty}m(t)=0$ and choosing $T_\epsilon$ such that $m(T_\epsilon)<\frac{\epsilon}{4}$, we can use the same argument as above and, for any $\bar{m}=m(0)<\frac{1}{2}$, we get 
$$P_{\bar{m}}\left(\sup_{t\geq T_\epsilon}\bm{m}^N(t)>\epsilon\right)< 5N e^{-C_\epsilon N}$$  
for a suitable constant $C_\epsilon$. 
 Finally, the second inequality in statement \emph{ii)} can be obtained from the first one by considering the process $\{1-\bm{m}^N(t)\}_{t\geq 0}$.
\end{proof}
\begin{remark}
Notice that, as an immediate consequence of  part $ii)$ of Theorem \ref{teo: time stable}, we get that in the case of a competitive population  the mean absorption time for the microscopic process is at least exponential in the system's size. 
Indeed, letting $\tau$ be such time, it is enough to fix any $\epsilon>0$ and $  \bar{m}\in (0,1)$ and observe that $E_{\bar{m}}[\tau]\geq e^{\widehat{C}_\epsilon N}
P_{\bar{m}}(\sup_{t\in [T_\epsilon, T_\epsilon+e^{\widehat{C}_\epsilon N}]}|\bm{m}^N(t)-\frac{1}{2}|\leq \epsilon)$.\\
Observe that, instead, for the classical mean field voter model (which corresponds to our model with $\phi$ constant) the dependence of absorption time on the size of the system is of power law type. 
\end{remark}
\section{Large scale dynamics in the presence of delay: emergence of periodic behaviours}
\subsection{Definition, thermodynamic limit and chaos propagation}\label{section:delay}
Let now suppose that each agent gets information on the state of the system with a certain delay.  We  modify the model 
in such a way that the influence of the magnetization on the dynamics is weighted through a delay kernel of Erlang type, as done in  \cite{ditlevsen2017multi} and \cite{collet2016rhythmic}.\\
For $k\in\mathbb{R}_+$, $n\in\mathbb{N}$ fixed, the information available to each agent at time $t$ is represented by the variables 
\begin{eqnarray*}
\ugamma^{(N, n)}(t):=\int_0^t \bm{m}^N(s)\frac{(t-s)^n}{n!}k^{n+1}e^{-k(t-s)}ds,\\
\ueta^{(N, n)}(t):=\int_0^t \big(1-\bm{m}^N(s)\big)\frac{(t-s)^n}{n!}k^{n+1}e^{-k(t-s)}ds,
\end{eqnarray*}
The parameters $n$ and $k$ tune the weight given to the past configurations, more precisely the parameter $k$ tells us how close to the current time $t$ is the maximum weight of the delay and the parameter $n$ gives us information on the shape of the peak.\\
The evolution of the opinion of an agent $i$ is given by the following time inhomogeneous transition rates:
\begin{equation*}
c (i,\sigma^N, t)=\left\{\begin{array}{ll}
m^N(t)\phi\big(\gamma^{(N,n)}(t)\big) & \mbox{if } \sigma_i^N=0,\\
\big(1-m^N (t))\phi\big(\eta^{(N,n)}(t)\big) &  \mbox{if } \sigma_i^N=1.
\end{array} \right.
\end{equation*} 
The advantage of using Erlang kernels is that, by increasing the number of variables, the system becomes markovian: indeed let us define, for $j=0,\ldots, n$ the processes $\ugamma^{(N, j)}:=\{\ugamma^{(N,j)}(t)\}_{t\geq0}$ and $\ueta^{(N, j)}:=\{\ueta^{(N,j)}(t)\}_{t\geq0}$ with 
\begin{eqnarray*}
\ugamma^{(N,j)}(t):=\int_0^t \bm{m}^N(s)\frac{(t-s)^j}{j!}k^{j+1}e^{-k(t-s)}ds,\\
\ueta^{(N,j)}(t):=\int_0^t \big(1-\bm{m}^N(s)\big)\frac{(t-s)^j}{j!}k^{j+1}e^{-k(t-s)}ds. 
\end{eqnarray*}
Letting  $\ugamma^{N}:=\big(\ugamma^{(N,j)}\big)_{j=0}^n$ and  $\ueta^{N}:=\big(\ueta^{(N,j)}\big)_{j=0}^n$ and denoting by $\gamma^N=
\big(\gamma^{(N,j)}\big)_{j=0}^n,\ \eta^N=\big(\eta^{(N,j)}\big)_{j=0}^n\in \R^{n+1}$ their possible states, the system is completely described by the $N+2(n+1)$ dimensional process 
\begin{equation*}
\Big\{\big(\usigma^N(t), \ugamma^{N}(t),\ueta^{N}(t)\big)\Big\}_{t\geq0}
\end{equation*}
which is a Markov process with generator 
\begin{multline}\label{generator micro delay}
\mathcal{L}_N^df(\sigma^N, \gamma^N,\eta^N) = \sum_{i=1}^N c\big(i,\sigma^N, \gamma^{(N,n)},\eta^{(N,n)}\big)\big[f(\sigma^{N,i}, \gamma^N,\eta^N)-f(\sigma^N, \gamma^N,\eta^N)\big] \\
+ \sum_{j=0}^n \Big[ k\big(\gamma^{(N,j-1)}-\gamma^{(N,j)}\big)\frac{\partial f}{\partial\gamma^{(N,j)}}+k\big(\eta^{(N,j-1)}-\eta^{(N,j)}\big)\frac{\partial f}{\partial\eta^{(N,j)}} \Big]
\end{multline}
where  
 $$\textstyle c (i,\sigma^N, t)=\left\{\begin{array}{ll}
m^N \phi(\gamma^{(N,n)}) & \mbox{if } \sigma_i^N=0,\\
\big(1-m^N)\phi(\eta^{(N,n)}) &  \mbox{if } \sigma_i^N=1
\end{array} \right.,$$ $f$ is a smooth function on $\{0,1\}^N\times[0,1]^{n+1}\times[0,1]^{n+1}$,  $\sigma^{N,i}$ denotes the configuration obtained from $\sigma^N$ by replacing $\sigma_i^N$ with $1-\sigma_{i}^N$ and 
we have used the convention 
\begin{equation*}
\gamma^{(N,-1)}:=m^N\quad\eta^{(N,-1)}:=1-m^N.
\end{equation*}

\begin{remark}
Observe that, if for some time $t'>0$ the magnetization $\bm{m}^N(t')\in\{0,1\}$ then agents will not change their state for any $t>t'$ and for all $j$, the process $\big(\ugamma^{(N,j)}(t)$,$\ueta^{(N,j)}(t)\big)$  will converge, as $t\to\infty$, to either $(0,1)$ or $(1,0)$ according to the value of $\bm{m}^N(t')$. 
\end{remark}

As in the case without delay, we study the limiting dynamics as $N$ goes to infinity. Next proposition states convergence in distribution of the sequence of microscopic processes $\{\big(\bm{m}^N, \bm{\gamma}^N, \bm{\eta}^N\big); N\geq 1\}$ to the solution  to the following non linear system of ODEs:
\begin{equation}\label{eq:macro}
\left\{\begin{array}{llllllll}
\dot{m}=m(1-m)[\phi(\gamma^{(n)})-\phi(\eta^{(n)})],\\
\dot{\gamma}^{(n)}=k[\gamma^{(n-1)}-\gamma^{(n)}],\\
\dot{\eta}^{(n)}=k[\eta^{(n-1)}-\eta^{(n)}],\\
\ldots\quad \ldots\quad \ldots\\
\ldots\quad \ldots\quad \ldots\\
\dot{\gamma}^{(0)}=k[m-\gamma^{(0)}],\\
\dot{\eta}^{(0)}=k[1-m-\eta^{(0)}],\\
\end{array}\right.
\end{equation}
with $\gamma^{(j)}(t)=\int_0^t m(s)\frac{(t-s)^j}{j!}k^{j+1}e^{-k(t-s)}ds$ and 
$\eta^{(j)}(t)=\int_0^t (1-m(s))\frac{(t-s)^j}{j!}k^{j+1}e^{-k(t-s)}ds$, $\forall j=0,\ldots,n$.\\
\begin{prop}
Assume that, given $(\bar{m},\bar{\gamma},\bar{\eta})\in[0,1]^{2n+3}$, for  every $\epsilon>0$,
 \begin{equation*}
 \lim_{N\to+\infty}P\Big(\|\big(\bm{m}^N(0),\ugamma^N(0),\ueta^N(0)\big)-(\bar{m},\bar{\gamma},\bar{\eta})\|>\epsilon\Big)=0
 \end{equation*}
 where $\|v\|$ denotes the norm of the vector $v$.  
 Then, as $N\to +\infty$, the sequence of Markov processes $\big\{(\bm{m}^N,\ugamma^N,\ueta^N);N\geq1\big\}$ converges in distribution, with respect to the Skorohod topology, to the unique solution of the system of equations \eqref{eq:macro} with $\big(m(0),\gamma(0),\eta(0)\big)=(\bar{m},\bar{\gamma},\bar{\eta})$.
\end{prop}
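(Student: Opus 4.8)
The plan is to follow the strategy used for Proposition \ref{prop: Kurtz one}, exploiting the fact that, thanks to the Erlang structure, the augmented process $\big(\bm{m}^N, \ugamma^N, \ueta^N\big)$ is already Markov with generator \eqref{generator micro delay}. First I would record that the relevant state space is compact: since each kernel $\frac{(t-s)^j}{j!}k^{j+1}e^{-k(t-s)}$ integrates to at most $1$ over $[0,t]$ and is applied to quantities in $[0,1]$, every coordinate $\gamma^{(j)},\eta^{(j)}$ stays in $[0,1]$, so the process lives in the compact set $E^N\times[0,1]^{n+1}\times[0,1]^{n+1}\subset[0,1]^{2n+3}$, with $E^N=\{j/N:0\leq j\leq N\}$.

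Next I would project onto the macroscopic variables exactly as in the non-delayed case. Writing $f\circ g$ for the composition of a test function $f\in\mathcal{C}^1([0,1]^{2n+3})$ with the map $g(\sigma^N,\gamma^N,\eta^N)=\big(\tfrac1N\sum_i\sigma_i^N,\gamma^N,\eta^N\big)$, applying $\mathcal{L}_N^d$ produces a reduced generator
\[
\mathcal{G}_N^d f = Nx(1-x)\phi(\gamma^{(n)})\Big[f\big(x+\tfrac1N,\gamma,\eta\big)-f(x,\gamma,\eta)\Big]+Nx(1-x)\phi(\eta^{(n)})\Big[f\big(x-\tfrac1N,\gamma,\eta\big)-f(x,\gamma,\eta)\Big]+\sum_{j=0}^n k(\gamma^{(j-1)}-\gamma^{(j)})\frac{\partial f}{\partial\gamma^{(j)}}+\sum_{j=0}^n k(\eta^{(j-1)}-\eta^{(j)})\frac{\partial f}{\partial\eta^{(j)}},
\]
with the convention $\gamma^{(-1)}=x,\ \eta^{(-1)}=1-x$. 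A first-order Taylor expansion of the two difference quotients in $x$ shows that the jump part converges to $x(1-x)[\phi(\gamma^{(n)})-\phi(\eta^{(n)})]\,\partial_x f$ uniformly on the compact state space, the error being $O(1/N)$ with a constant controlled by $\sup|\partial_{xx}f|$ and by the boundedness of the rates, while the drift terms in $\gamma,\eta$ do not depend on $N$. Hence $\sup|\mathcal{G}_N^d f-\mathcal{G}^d f|\to0$, where $\mathcal{G}^d$ is precisely the generator of the deterministic semigroup associated with the ODE system \eqref{eq:macro}.

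With this convergence of generators in hand, I would invoke the same results from \cite{EthierKurtz} (Ch.\ 3, Corollary 7.4 and Ch.\ 4, Theorem 8.10) used in Proposition \ref{prop: Kurtz one}. Two ingredients must be verified: relative compactness of $\big\{(\bm{m}^N,\ugamma^N,\ueta^N);N\geq1\big\}$ in the Skorohod topology, which follows from the compactness of the state space together with the uniform boundedness of the jump rates and of the drift coefficients; and uniqueness of the limiting deterministic evolution. The latter holds because the right-hand side of \eqref{eq:macro} is locally Lipschitz on $[0,1]^{2n+3}$ — polynomial in $m$ composed with the $\mathcal{C}^1$ function $\phi$, plus linear terms — and, checking the vector field on each face, one sees that the cube $[0,1]^{2n+3}$ is forward-invariant (at $m\in\{0,1\}$ one has $\dot m=0$, and at $\gamma^{(j)}\in\{0,1\}$ the sign of $\dot\gamma^{(j)}=k(\gamma^{(j-1)}-\gamma^{(j)})$ points inward, likewise for $\eta$), so the Cauchy problem with datum $(\bar m,\bar\gamma,\bar\eta)$ admits a unique global solution. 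The assumed convergence in probability of the initial data then propagates to the trajectories, yielding convergence in distribution to this unique solution.

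The main difference from the non-delayed case, and the only genuinely delicate point, is the mixed nature of the limiting generator: the magnetization contributes a jump part that must be linearized, whereas the delay coordinates evolve by a deterministic drift already in limit form. I expect the verification of the Ethier--Kurtz hypotheses in this non-pure-jump, higher-dimensional setting — establishing relative compactness and confirming forward-invariance of the compact domain — to be the part requiring the most bookkeeping, although none of it presents a substantial obstruction once the uniform generator convergence and the Lipschitz bound are in place.
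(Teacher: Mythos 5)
Your proposal follows the same route as the paper: reduce the generator $\mathcal{L}_N^d$ to $\mathcal{G}_N^d$ via the projection $f\circ g$ onto $(m^N,\gamma^N,\eta^N)$, establish uniform convergence of $\mathcal{G}_N^d$ to the generator $\mathcal{G}^d$ of the deterministic flow \eqref{eq:macro}, and conclude by the Ethier--Kurtz convergence theorems exactly as in Proposition \ref{prop: Kurtz one}. The additional points you verify (compactness and forward-invariance of the state space, Lipschitz uniqueness of the limiting ODE) are details the paper leaves implicit but are consistent with its argument.
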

\begin{proof}
Following the proof of Proposition \ref{prop: Kurtz one} observe that, for any  
$f: E^N\times [0,1]^{n+1}\times [0,1]^{n+1}$ we have $f(m^N, \gamma^N, \eta^N)=(f\circ g)(\sigma^N, \gamma^N, \eta^N)$ and 
$
\mathcal{L}_N^d (f\circ g)(\sigma^N, \gamma^N,\eta^N) = \mathcal{G}_N^d f(m^N, \gamma^N,\eta^N)$  
with 
\begin{multline}
\mathcal{G}_N^d f(m^N, \gamma^N,\eta^N)=Nm^N(1-m^N)\phi(\eta^{(N,n)})\big[ f(m^N-1/N,\gamma^{(N,n)},\eta^{(N,n)})\\
  -f(m^N,\gamma^{(N,n)},\eta^{(N,n)})\big]\\
 + Nm^N(1-m^N)\phi(\gamma^{(N,n)})\big[ f(m^N+1/N,\gamma^{(N,n)},\eta^{(N,n)})f(m^N,\gamma^{(N,n)},\eta^{(N,n)})\big] \\
 + \sum_{j=0}^n k\big(\gamma^{(N,j-1)}-\gamma^{(N,j)}\big)\frac{\partial f}{\partial\gamma^{(N,j)}}+k\big(\eta^{(N,j-1)}-\eta^{(N,j)}\big)\frac{\partial f}{\partial\eta^{(N,j)}}. \nonumber
\end{multline}    
Then the conclusion follows as in Proposition \ref{prop: Kurtz one} taking $\mathcal{G}^d$ the generator of the semigroup associated to the deterministic evolution \eqref{eq:macro}. 
\end{proof}
Next result is propagation of chaos: if $N$ is large enough agents in any finite group behave like independent units, each one following the evolution of a limit process $\usigma$ with jump rates:
\begin{equation}\label{macrosigmaonedelay}
\begin{array}{ll}
0\to 1\quad & m\phi\big(\gamma^{(n)}\big),\\
1\to 0\quad &(1-m)\phi\big(\eta^{(n)}\big)
\end{array}
\end{equation}
where $m$, $\gamma^{(n)}$ and $\eta^{(n)}$ are the solutions of \eqref{eq:macro}. \\

\begin{prop}\label{prop chaos one d}
Let us fix $T>0$. For any $N>1$, let $(\usigma^{N,T}, \ugamma^{N,T}, \ueta^{N,T})$ $=$ $\{\big(\usigma^N(t), \ugamma^N(t),$ $ \ueta^N(t)\big) \}_{t\in [0,T]}$ be the Markov process with generator $\mathcal{L}_N^d$ defined in \eqref{generator micro delay}.  Denote by $\mu_{[0,T]}$  the distribution of the Markov processes  $\usigma^T= \{\usigma(t)\}_{t\in [0,T]}$ with transition rates defined in \eqref{macrosigmaonedelay} and \eqref{eq:macro} and by $\mu_0$ the distribution of $\usigma(0)$.  Assume that $\usigma^{N, T}(0)$ has a permutation invariant distribution and  $\lim_{N\to\infty}E\big[ |\usigma_{i}^{N, T}(0)-\bar{\usigma}_{i}^N(0)|\big] =0$ for any $i$, where $\{\bar{\usigma}_{i}^N(0); i\geq 1\}$ are i.i.d. with common distribution $\mu_0$. Then, the sequence of stochastic processes $\{\usigma^{N, T}; N\geq 1\}$ is $\mu_{[0,T]}$-chaotic.\\
\end{prop}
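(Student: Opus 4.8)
The plan is to deduce chaoticity from a synchronous coupling between the $N$-particle system and a family of $N$ independent copies of the limit particle, and then to control the expected number of discrepancies by a Gronwall argument, as in the mean-field framework of \cite{Andreis} but with the extra bookkeeping forced by the memory variables. First I would realize, on one probability space, the interacting process $\usigma^{N,T}$ together with $N$ independent copies $\bar\usigma_1,\dots,\bar\usigma_N$, each distributed according to $\mu_{[0,T]}$, i.e.\ each driven by the \emph{deterministic} rates \eqref{macrosigmaonedelay} associated with the unique solution $(m,\gamma^{(n)},\eta^{(n)})$ of \eqref{eq:macro}. I would couple the two dynamics particle by particle with common jump clocks, so that a matched pair sharing a common state flips together whenever possible and a new discrepancy can be created only through the mismatch of the two flip rates. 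Writing $a^N(t)=\bm{m}^N(t)\phi(\gamma^{(N,n)}(t))$, $b^N(t)=(1-\bm{m}^N(t))\phi(\eta^{(N,n)}(t))$ for the interacting rates and $a(t)=m(t)\phi(\gamma^{(n)}(t))$, $b(t)=(1-m(t))\phi(\eta^{(n)}(t))$ for the limiting ones, and setting $Q^N(t)=P(\usigma_1^N(t)\neq\bar\usigma_1(t))$---which by permutation invariance equals $\frac1N\sum_{i=1}^N E[|\usigma_i^N(t)-\bar\usigma_i(t)|]$---the optimal coupling, through Dynkin's formula for the coupled generator, should yield the differential inequality $\frac{\de}{\de t}Q^N(t)\le E[|a^N(t)-a(t)|]+E[|b^N(t)-b(t)|]$.

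The next step is to turn the right-hand side into something expressed through $Q^N$. Since $\phi\in\mathcal{C}^1([0,1])$ is bounded and Lipschitz, I would bound $|a^N-a|\le C(|\bm{m}^N-m|+|\gamma^{(N,n)}-\gamma^{(n)}|)$ and $|b^N-b|\le C(|\bm{m}^N-m|+|\eta^{(N,n)}-\eta^{(n)}|)$, reducing everything to the three quantities $E|\bm{m}^N-m|$, $E|\gamma^{(N,n)}-\gamma^{(n)}|$, $E|\eta^{(N,n)}-\eta^{(n)}|$. For the magnetization the triangle inequality and exchangeability give $E|\bm{m}^N(t)-m(t)|\le Q^N(t)+E|\bar{\bm m}^N(t)-m(t)|$, where $\bar{\bm m}^N=\frac1N\sum_i\bar\usigma_i$; using that $E[\bar\usigma_i(t)]=m(t)$ (which one checks is consistent with the first line of \eqref{eq:macro}), the last term is a law-of-large-numbers fluctuation of order $N^{-1/2}$. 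For the delay variables I would exploit the integral representation of the Erlang kernel: both $\gamma^{(N,n)}$ and $\gamma^{(n)}$ are convolutions of $\bm{m}^N$, respectively $m$, against the same kernel $K_n(u)=\frac{u^n}{n!}k^{n+1}e^{-ku}$, so $|\gamma^{(N,n)}(t)-\gamma^{(n)}(t)|\le\int_0^t|\bm{m}^N(s)-m(s)|K_n(t-s)\,\de s\le C_k\int_0^t|\bm{m}^N(s)-m(s)|\,\de s$ with $C_k=\sup_{u\ge0}K_n(u)<\infty$, and identically for $\eta$.

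Substituting these bounds, I expect a closed inequality of the form $\frac{\de}{\de t}Q^N(t)\le C_1 Q^N(t)+C_2\int_0^t Q^N(s)\,\de s+C_3 N^{-1/2}(1+t)$, whose iterated-integral Gronwall solution gives $\sup_{t\le T}Q^N(t)\le C_T(Q^N(0)+N^{-1/2})$. The hypothesis $\lim_N E[|\usigma_i^{N,T}(0)-\bar\usigma_i^N(0)|]=0$ forces $Q^N(0)\to0$ (the memory variables start at the common value $0$ and so contribute nothing initially), whence $\sup_{t\le T}Q^N(t)\to0$. Integrating the creation-rate bound over $[0,T]$ then controls the probability that the coupled trajectories of any tagged particle ever differ on $[0,T]$, which by exchangeability and a union bound over a fixed group of $\nc$ coordinates yields convergence of the finite-dimensional marginals in path space, i.e.\ $\mu_{[0,T]}$-chaoticity.

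The main obstacle I anticipate is precisely the memory carried by the rates: the flip rates depend on the entire past of the magnetization through $\gamma^{(N,n)}$ and $\eta^{(N,n)}$, so a naive differential inequality will not close in $Q^N$ alone, and this is exactly the point where the delayed case departs from the memoryless one of Proposition \ref{prop chaos one 0}. The device that saves the argument is the Markovian, linear-ODE structure of the Erlang kernel, which lets one dominate the delay discrepancies by a single time integral of $|\bm{m}^N-m|$ and thus recover a Gronwall inequality with only one extra iterated integral; this is where the special choice of kernel is used. The remaining care lies in justifying the differential inequality for $Q^N$ rigorously from the coupled version of the generator \eqref{generator micro delay} and in making the $N^{-1/2}$ law-of-large-numbers control uniform enough on $[0,T]$ to survive the time integration.
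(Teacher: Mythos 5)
Your proposal is correct and follows essentially the same route as the paper: a synchronous coupling of the $N$-particle system with $N$ i.i.d.\ copies of the limit dynamics driven by the same Poisson clocks, a Gronwall estimate on the expected discrepancy that closes thanks to the law of large numbers for the auxiliary i.i.d.\ magnetization and to the domination of the Erlang-kernel memory terms by an iterated time integral, and finally exchangeability to pass to a fixed group of tagged particles in path space. The only cosmetic difference is that the paper runs Gronwall directly on $E[\sup_{r\le t}|\usigma^N_i(r)-\bar\usigma^N_i(r)|]$ (which gives the uniform-in-time control in one step) rather than on the instantaneous discrepancy probability $Q^N(t)$.
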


 \begin{proof}
 The proof uses standard arguments on propagation of chaos (see, e.g., \cite{sznitman1991topics},\cite{graham1992mckean}, \cite{Andreis}).\\
 We recall that, given a polish space $(E, \rho)$ and the set $\mathcal{M}_1(E)$ of Borel probability measures on $E$ with finite first moment,  the Wasserstein distance between  $\mu, \nu\in \mathcal{M}_1(E)$ is defined by 
 \begin{equation}\label{wasserstein}
 W^1_\rho(\mu, \nu)=\inf\{\int_{E\times E}\rho(x,y)\pi(dx, dy): \pi \mbox{ has marginals } \mu \mbox{ and } \nu\}
 \end{equation}
 and that convergence with respect to such distance implies weak convergence.  
Now, let us fix $\nc\in \N$. For $N\geq \nc$,  denote by 
 $\mu^{N, \nc}_{[0,T]}$ the distribution of the process $\{(\usigma^N_{1}(t),\ldots, \usigma^N_{\nc}(t)) \}_{t\in [0,T]}$ and by $\mu_{[0,T]}^{\otimes \nc}$ the product of $\nc$ copies of $\mu_{[0,T]}$. Such measures are defined on the space  $\mathcal{D}([0,T]; \mathbb{R}^\nc)$ of $\R^\nc$- valued  càdlàg  functions on $[0, T]$ with the Skorohod distance $\rho$. \\
 
We prove the following stronger result: 
$$\lim_{N\to +\infty}W^1_{\bar{\rho}} (\mu^{N,\nc}_{[0,T]}, \mu_{[0,T]}^{\otimes \nc})=0,$$ 
where $\bar{\rho}=\| \cdot \|_{\infty}$ denotes the uniform metric on 
 $\mathcal{D}([0,T]; \mathbb{R}^\nc)$ and $W^1_{\bar{\rho}}$ is defined as in \eqref{wasserstein}.
To this purpose, we define a suitable coupling between the microscopic system and a system of i.i.d.\ particles each one following the limiting dynamics.\\
Let us fix a  probability space $(\Omega, \mathcal{F}, \{\mathcal{F}_t\},  P)$ satisfying the usual conditions, where it is defined  a family
 $\mathcal{N}=\{ \mathcal{N}^{i};  i\geq 1\}$ 
 of i.i.d.\ adapted Poisson random measures with intensity  
 $\ell \otimes \ell$, where $\ell$ denotes the restriction to $[0, \infty )$
  of the Lebesgue measure.
 The microscopic process $(\usigma^N, \ugamma^N, \ueta^N)$ can be realized as the solution of the following SDE with initial condition $\usigma^N(0)=\usigma^{N,T}(0)$:
\begin{eqnarray}\label{eq:SDEdelaySigma}
&& \de \bm{\upvarsigma}_{i}^N(t) =\int_0^\infty\psi( \bm{\upvarsigma}_{i}^N(t-))\mathbbm{1}_{(0, \zeta( \bm{\upvarsigma}_{i}^N(t-),\bm{m}^N(t-),\ugamma^{(N,n)}(t-),\ueta^{(N,n)}(t-))]}(u)\mathcal{N}^{i}(\de u,\de t), \\\label{eq:SDEdelayGamma}
&& \de \ugamma^{(N,j)}(t) = k\big(\ugamma^{(N,j-1)}(t)-\ugamma^{(N,j)}(t)\big)\de t,\\\label{eq:SDEdelayEta}
&& \de \ueta^{(N,j)}(t) = k\big(\ueta^{(N,j-1)}(t)-\ueta^{(N,j)}(t)\big)\de t,
\end{eqnarray}
where $i=1,\ldots,N$, $j=0,\ldots,n$,  
 $\zeta:\{0,1\}\times[0,1]\times[0,1]^2\to\mathbb{R}_+$ is the jump rate function
\begin{equation*}
\zeta( \sigma_{i}^N,m^N,\gamma^{(N,n)},\eta^{(N,n)}) = (1-\sigma_i^N)m^N\phi(\gamma^{(N,n)})+\sigma_i^N(1-m^N)\phi(\eta^{(N,n)}) 
\end{equation*}
and $\psi:\{0,1\}\to\mathbb{R}_+$ is the jump amplitude function
\begin{equation*}
\psi(\sigma_i^N)=1-2\sigma_i^N.
\end{equation*}  
Strong existence and uniqueness of solutions of \eqref{eq:SDEdelaySigma}, \eqref{eq:SDEdelayGamma}, \eqref{eq:SDEdelayEta} can be easily derived from  Theorem 1.2 of \cite{graham1992mckean}: indeed, 
equations \eqref{eq:SDEdelayGamma}, \eqref{eq:SDEdelayEta} are linear and,  letting 
$f(\sigma^N_{i}, \gamma^N, \eta^N,  u):=\psi (\sigma^N_{i})\mathbbm{1}_{(0,\lambda( \sigma_{i}^N, m(\sigma^N), \gamma^{N, n}, \eta^{N. n})]}(u)$, where $m(\sigma^N)=\frac{1}{N}\sum_{j=1}^{N}\sigma^N_{j}$, recalling that $\phi$ is Lipschitz, the following $L^1$ Lipschitz condition holds:  
$$\sum_{i=1}^N \int |f(\sigma^N_{i}, \gamma^N, \eta^N,  u)-f(\tilde{\sigma}^N_{i}, \tilde{\gamma}^N, \tilde{\eta}^N, u)| \de 	u\leq C  
 \|(\sigma^N, \gamma^N, \eta^N)-(\tilde{\sigma}^N, \tilde{\gamma}^N, \tilde{\eta}^N)\| $$ for all $(\sigma^N, \gamma^N, \eta^N),\   (\tilde{\sigma}^N,  \tilde{\gamma}^N, \tilde{\eta}^N)  \in \{0,1\}^N\times [0,1]^{n+1}\times [0,1]^{n+1}$,
where $\|\cdot \|$ denotes the $\ell^1$ norm on $\mathbb{R}^N\times \mathbb{R}^{2n+2}$ and $C$ is a suitable constant.\\
On the same probability space, let us consider a vector of i.i.d. processes $\bar{\usigma}^N:=(\bar{\usigma}^N_i)_{i=1}^{N}$ with evolution defined by:
\begin{equation}\label{eq:SDEdelaySigmaM}
\de \bar{\usigma}_{i}^N(t)  = \int_0^\infty\psi( \bar{\usigma}_{i}^N(t-))\mathbbm{1}_{(0,\zeta( \bar{\usigma}_{i}^N(t-),m(t-),\gamma^{(n)}(t-),\eta^{(n)}(t-))]}(u)\mathcal{N}^{i}(\de u,\de t)
\end{equation}
for $i=1,\ldots, N$, where $m, \gamma^{(n)},\eta^{(n)}$ are the solutions to the \eqref{eq:macro} and the family of Poisson random measure $\mathcal{N}=\{\mathcal{N}^i;\ i=1,\ldots,N\}$ is the same as the one in equations \eqref{eq:SDEdelaySigma}.\\ 
Now, by the above properties we get, for any $i=1,\ldots,\nc$  
\begin{multline}
E\left[\sup_{r\in[0,t]}\big|\usigma_{i}^N(r)-\bar{\usigma}_i^N(r)\big|\right]\leq C_1\int_0^t E\left[\sup_{r\in[0,s]}\big|\usigma_{i}^N(r)-\bar{\usigma}_i^N(r)\big|\right]\de s \\ 
+ E\left[\big|\usigma_{i}^N(0)-\bar{\usigma}_i^N(0)\big|\right]\ + C_2 \int_0^t E\left[\big|\bar{\bm{m}}^N(s)-m(s)\big|\right]\de s \\
+ C_3 \int_0^t \int_0^sE\left[\big|\bar{\bm{m}}^N(r)-m(r)\big|\right]\frac{(s-r)^n}{n!}k^{n+1}e^{-k(s-r)}\de r\de s,
\end{multline}
where $C_1,C_2,C_3$ depend only on $\|\phi\|_\infty$, $\|\phi'\|_\infty, k$ and $n$. \\
Now, using the hypothesis on the initial conditions and the law of large numbers for the sequence $\{\bar{\bm{m}}^N;N\geq 1\}$, by Gronwall's Lemma   
we can conclude that
 $$E[\bar{\rho}(\usigma^N,\ \bar{\usigma}^N)]\leq \sum_{i=1}^\nc E[\sup_{t\in [0, T]}|\usigma^N_i(t)-\bar{\usigma}^N_i(t)|]\xrightarrow{N\to \infty} 0$$
and the proof is complete. 
 \end{proof}     

\subsection{Delay induced Hopf bifurcation in competitive environments}
The purpose of this section is to investigate  the emergence of periodic solutions of the system of ODEs \eqref{eq:macro} when the delay parameters are properly taken. We recall that points in the phase space are vectors of the form 
 $(m, \gamma^{(0)},\ldots,\gamma^{(n)},\eta^{(0)}\ldots,\eta^{(n)})\in \R^{2n+3}$.\\ 
If $\phi$ is strictly monotone we can easily observe that the system has three equilibrium points $x_0=(0,0,\ldots,0,1,\ldots,1)$, $x_1=(1,1,\ldots,1,0,\ldots,0)$ and $x^*=(\frac{1}{2},\frac{1}{2},\ldots,\frac{1}{2}, \frac{1}{2}\ldots,\frac{1}{2})$ in $\R^{2n+3}$. The points $x_0$ and $x_1$ are traps for the microscopic process, while the equilibrium point $x^*$ appears only in the macroscopic model. We recall that in the model without delay a phase transition occurs when the sign of the derivative of $\phi$ changes, so that the stability  
of the point $\frac{1}{2}$ gets modified. Here we a fix a strictly monotone function $\phi$ in the delayed model, and look for a phase transition 
corresponding to the delay parameters $n$ and $k$ (as defined in section \ref{section:delay}). More precisely, focusing on the case when $\phi$ is decreasing,   
we aim at showing that, for at least one value of $n$, corresponding to the parameter $k$ there is a supercritical Hopf bifurcation at $x^*$, that is, for $k$ below a certain threshold, the point $x^*$ becomes unstable and a stable limit cycle bifurcates from it.\\

We start linearising  the system \eqref{eq:macro} for a fixed $n\geq1$. The Jacobian matrix $J(k)\in\mathbb{R}^{2n+3}\times\mathbb{R}^{2n+3}$ at the equilibrium point $x^*$  is given by
\begin{equation}\label{macrogeneral}
J(k)=\left(\begin{array}{lllllllllccccccccc}
0 & \frac{\phi'(\frac{1}{2})}{4} & \frac{-\phi'(\frac{1}{2})}{4} & 0 & 0 & \ldots & 0 & 0 \\ 
0 & -k & 0 & k & 0 &\ldots & 0 & 0 \\
0 & 0  & -k & 0 &  k &\ldots & 0 & 0 \\
0 &  0 & 0 & -k &  0 & \ldots & 0 & 0 \\
. & . & . & . & . & \ldots& . & .\\
. & . & . & . & . & \ldots & . & . \\
k & 0 & 0 & 0 & 0 & \ldots &-k & 0 \\
-k & 0 & 0 & 0 & 0& \ldots & 0 &-k \\
\end{array}
\right )
\end{equation} 
We can immediately observe that only the derivative in $\frac{1}{2}$ of the function $\phi$ appears in the matrix. Letting $c:=\frac{\phi'(\frac{1}{2})}{4}$,  the associated characteristic equation is given by:
\begin{equation}\label{eq:charPolyG}
	 (\lambda+k)^{n+1}\big(\lambda (\lambda+k)^{n+1}-2ck^{n+1}\big)=0
\end{equation}

Observe that if $\phi$ is increasing, i.e. if $c>0$, whatever the values of $k$ and $n$ are, there exist at least one positive eigenvalue $\lambda^+$ and one negative eigenvalue $\lambda^-$. Indeed 
letting $g(\lambda)= (\lambda+k)^{n+1}\big(\lambda (\lambda+k)^{n+1}-2ck^{n+1}\big)$ we have $g(-k)=0$, and so $\lambda^- =-k$, and 
$g(0)=-2ck^{n+1}<0$, which implies that there exists $\lambda^+\in\mathbb{R}_+$ such that $g(\lambda^+)=0$. Then $x^*$ is unstable for any $k$ and $n$. 
\\ 
Then, let us consider $\phi$ decreasing.  In this case, since  $c<0$, the previous argument does not apply and, due to the impossibility, to our knowledge, of writing an explicit expression for the solutions to the characteristic equation \eqref{eq:charpoly} we cannot use  standard arguments to prove the existence of a Hopf bifurcation (see, e.g., Theorem 1 pag 352 in \cite{perko2013differential}). 
Then we shall use the following criterion, stated in \cite{LiuHopf}, for detecting Hopf bifurcation without using eigenvalues. \\

Let consider a one-parameter family of differential equations
\begin{equation}\label{eq:diffsystem}
\dot{\chi}(t)=F_k\big(\chi(t)\big),\qquad t\geq 0,\ \chi(t)\in\mathbb{R}^n,\ k\in\mathbb{R}, 
\end{equation}
with an equilibrium point $(x^*,k^*)$ and $F_k\in\mathcal{C}^\infty\big(\mathbb{R}^n\big)$, $\forall k\in\mathbb{R}$. 
Let denote the characteristic polynomial of the Jacobian matrix $J(k):=\big(\frac{\partial}{\partial z_i}(F_{k})_j\big)_{i,j=1,\ldots n}$ by
\begin{equation*}
p(\lambda;k)=p_n(k)\lambda^n+\ldots+p_1(k)\lambda+p_0(k)
\end{equation*}
where every $p_i(k)$ is a smooth function of $k$ and $p_n(k)=1$. For any $j=1,\ldots,n$ define the matrices
\begin{equation}\label{Ln}
L_j(k)=\left(\begin{array}{llllcccc}
p_1(k) & p_0(k) & \cdots & 0 \\ 
p_3(k) & p_2(k) & \cdots & 0 \\ 
\cdot & \cdot &  \cdots & \cdot\\
\cdot & \cdot &  \cdots & \cdot\\
p_{2j-1}(k) & p_{2j-2}(k) &\cdots & p_n(k)	 
\end{array}
\right ),
\end{equation}
where $p_i(k)=0$ if $i<0$ or $i>n$ and call $D_j(k)$ their determinant, i.e. $D_j(k):=\det\big(L_j(k)\big)$.  

\begin{theorem}\label{teo:criterionH}(\cite{LiuHopf})
Assume there is a smooth curve of equilibria $\big(x^k,k\big)$ with $x^{k^*}=x^*$ for the system of differential equations \eqref{eq:diffsystem}. If 
\begin{eqnarray}
(CH1) & p_0(k^*)>0, D_1(k^*)>0,\ldots,D_{n-2}(k^*)>0,D_{n-1}(k^*)=0 \\
(CH2) & \frac{d D_{n-1}(k^*)}{dk}\neq 0
\end{eqnarray}
then there exists a supercritical Hopf bifurcation.
\end{theorem}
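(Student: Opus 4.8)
The statement is quoted from \cite{LiuHopf}, so in the paper itself the proof is just that reference; what follows is the argument one reconstructs. Its guiding idea is to convert the purely algebraic hypotheses (CH1)--(CH2) on the Hurwitz determinants $D_j(k)$ into the spectral hypotheses of the classical Hopf bifurcation theorem, thereby avoiding any explicit use of the roots of $p(\lambda;k)$.

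The cornerstone is \emph{Orlando's formula}, which expresses the penultimate Hurwitz determinant of the monic polynomial $p(\lambda;k)$ as a product over pairs of its roots $\lambda_1(k),\ldots,\lambda_n(k)$:
\begin{equation*}
D_{n-1}(k)=(-1)^{n(n-1)/2}\prod_{1\leq i<j\leq n}\big(\lambda_i(k)+\lambda_j(k)\big).
\end{equation*}
Consequently $D_{n-1}(k^*)=0$ forces at least one pair of roots with vanishing sum, $\lambda_i+\lambda_j=0$. The positivity chain in (CH1) then does the bookkeeping that turns this into a clean spectral picture: $p_0(k^*)>0$ excludes a zero eigenvalue, while a Routh--Hurwitz argument based on $D_1(k^*)>0,\ldots,D_{n-2}(k^*)>0$ confines the remaining $n-2$ roots to the open left half-plane and forces the distinguished pair to be purely imaginary rather than a real symmetric pair $\pm a$. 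One is thus left with a \emph{simple} conjugate pair $\lambda=\pm i\omega_0$, $\omega_0>0$, all other eigenvalues being stable.

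Transversality is then read directly off Orlando's formula. Since the imaginary pair is simple, the implicit function theorem produces a smooth eigenvalue branch $\lambda(k)=\alpha(k)+i\beta(k)$ with $\alpha(k^*)=0$ and $\beta(k^*)=\omega_0$. Differentiating the product at $k^*$ by the product rule, every resulting term still contains the vanishing factor $\lambda+\bar\lambda$ except the single term in which that factor is itself differentiated; since $\lambda+\bar\lambda=2\alpha$, one gets $\frac{d}{dk}D_{n-1}(k^*)=2C\,\alpha'(k^*)$ with $C\neq0$ the product of the remaining (nonzero) pair sums. Therefore (CH2) is precisely the transversality condition $\alpha'(k^*)\neq0$, and the standard Hopf theorem (Theorem~1, p.~352 of \cite{perko2013differential}) yields a one-parameter family of periodic orbits bifurcating from $x^*$ at $k=k^*$.

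The genuinely delicate point is the qualifier \emph{supercritical}. Conditions (CH1)--(CH2) control only the transversal crossing of the imaginary axis, so they establish that a Hopf bifurcation occurs but say nothing about its direction. To certify supercriticality one must restrict the vector field to the two-dimensional center manifold, reduce it to Poincaré normal form, and verify that the first Lyapunov coefficient is strictly negative; this sign is not encoded in the Hurwitz data. For the specific Jacobian \eqref{macrogeneral} it will therefore have to be computed separately, and I expect this Lyapunov-coefficient calculation, rather than the spectral reduction above, to be the main obstacle.
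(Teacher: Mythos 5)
The paper offers no proof of this statement---it is quoted directly from \cite{LiuHopf}---and your reconstruction via Orlando's formula is exactly the argument of that reference: (CH1) isolates a simple purely imaginary pair with all remaining eigenvalues in the open left half-plane, (CH2) becomes, upon differentiating the product, the transversality condition $\alpha'(k^*)\neq 0$, and the classical Hopf theorem then applies. Your closing caveat is also well taken and worth recording: Liu's criterion certifies a \emph{simple, transversal} Hopf bifurcation but says nothing about its direction, so the qualifier ``supercritical'' in the statement as quoted does not follow from (CH1)--(CH2) alone and would require a separate first-Lyapunov-coefficient computation on the center manifold for the specific system at hand.
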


In a model like the one we consider, the above criterion is analytically easy to apply only for small values of $n$ 
In any case, our interest in detecting delay-induced periodicity is purely qualitative, therefore we will consider only the case $n=2$. In the case when 
$\phi$ is a linear function the dimension of the system can be reduced and we are able to get the result analytically also for $n=3,4,5$,  with an explicit expression of the associated critical parameter $k^*$ (see Appendix \ref{appendixA}). 
However, we believe that using numerical methods, the result can be shown 
to hold for bigger values of $n$ and for a general $\phi$.\\  
Let us first analyse the case when $\phi$ is linear function and we can find an explicit value for the bifurcation parameter $k^*$.
%Then we consider the case  of a general $\phi$, where an explicit formula for $k^*$ can be found using a mathematical software. We will illustrate the emergence of a periodic behaviour in both cases also through simulations of the microscopic process.  In the appendix we extend the calculation of the linear case for $n=3,4,5$ for which we provide an explicit expression of $k^*$. 

Let  $\phi(z)=-az+b$, with $b>-a$ and $a>0$,  The system \eqref{eq:macro} can be rewritten introducing new variables $\xi^{(i)}=\gamma^{(i)}-\eta^{(i)}$, $i=0,\ldots, n$ as follows  
\begin{equation}\label{macro2}
\left\{\begin{array}{llllllll}
\dot{m}=-am(1-m)\xi^{(n)};\\
\dot{\xi}^{(n)}=k[\xi^{(n-1)}-\xi^{(n)}];\\
\ldots\quad \ldots\quad \ldots\\
\ldots\quad \ldots\quad \ldots\\
\dot{\xi}^{(0)}=k[2m-\xi^{(0)}-1];\\
\end{array}\right.
\end{equation}
The equilibrium points are  $x_0=(0,-1,\ldots,-1)$, $x_1=(1,1,\ldots,1)$ and $x^*=(\frac{1}{2},0,\ldots,0)$.  Linearising the system at $x^*$, the Jacobian matrix $J(k)\in\mathbb{R}^{2n+3}\times\mathbb{R}^{2n+3}$ has  the following form
\color{black}
\begin{equation}\label{matrix:linear}
J(k)=\left(\begin{array}{llllllllcccccccc}
0 & -\frac{a}{4} & 0 & 0 & \ldots & 0 & 0 \\ 
0 & -k & k & 0 & \ldots & 0 & 0 \\
0 &  & -k & k & \ldots & 0 & 0 \\
. & . & . & . &\ldots& . & .\\
. & . & . & . &\ldots & . & . \\
0 & 0 & 0 & 0 & \ldots &-k & k \\
2k & 0 & 0 & 0 & \ldots & 0 &-k \\
\end{array}
\right )
\end{equation}
The associated characteristic equation is given by 
\begin{equation}\label{eq:charpoly}
\lambda (\lambda+k)^{n+1}+\frac{a}{2}k^{n+1}=0.
\end{equation}
Now using the notation in \eqref{eq:diffsystem} we have $\chi=(m,\xi^{(0)},\ldots,\xi^{(n)})$ and $F_k(\chi)=\big(-am(1-m)\xi^{(n)},k[2m-\xi^{(0)}-1],\ldots,k[\xi^{(n-1)}-\xi^{(n)}]\big)$. 

For $n=2$, according to formula \eqref{eq:charpoly} and applying 
Theorem \ref{teo:criterionH} we obtain
\begin{equation*}
p(\lambda;k)=\lambda^4+3k\lambda^3+3k^2\lambda^2+k^3\lambda+\frac{a}{2}k^3=0
\end{equation*}
and
\begin{eqnarray*}
 p_0(k)&=&\frac{a}{2}k^3,\\
 D_1(k)&=&k^3,\\
 D_2(k)&=&\frac{3}{2}k^4(2k-a),\\
 D_3(k)&=&\frac{1}{2}k^5(16k-9a),\\
 \frac{d D_{3}(k)}{dk}&=&\frac{3}{2}k^4(96k-45a). 
\end{eqnarray*}
Taking $k^*=\frac{9}{16}a$, conditions $(CH1)$ and $(CH2)$ are satisfied and we have an Hopf bifurcation, see Figure \ref{fig:n2macro}. 

\begin{figure}[H]
\center
\subfloat[][$k<k^*$]{
\includegraphics[scale=0.1]{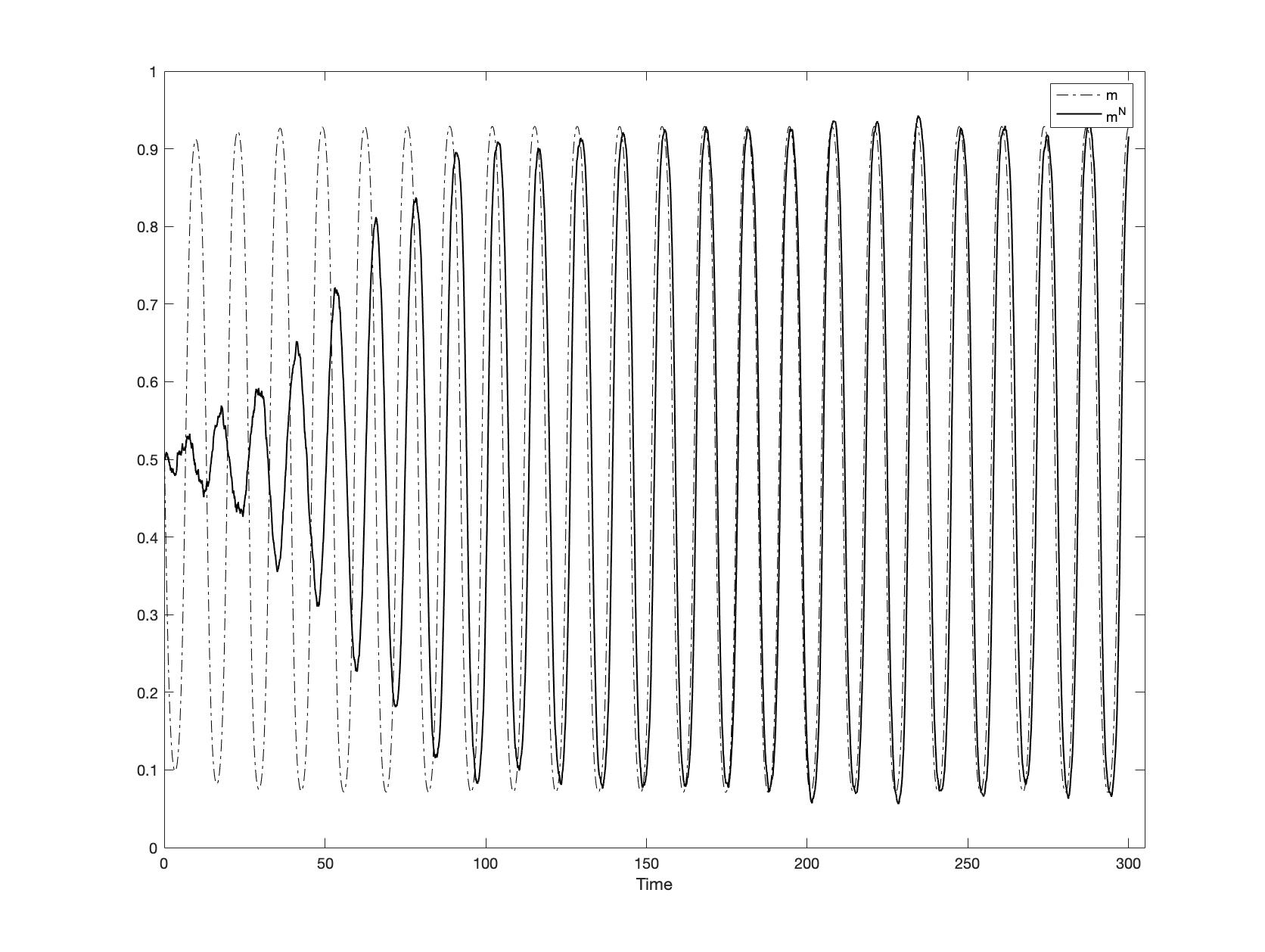}
}
\subfloat[][$k>k^*$]{
\includegraphics[scale=0.15]{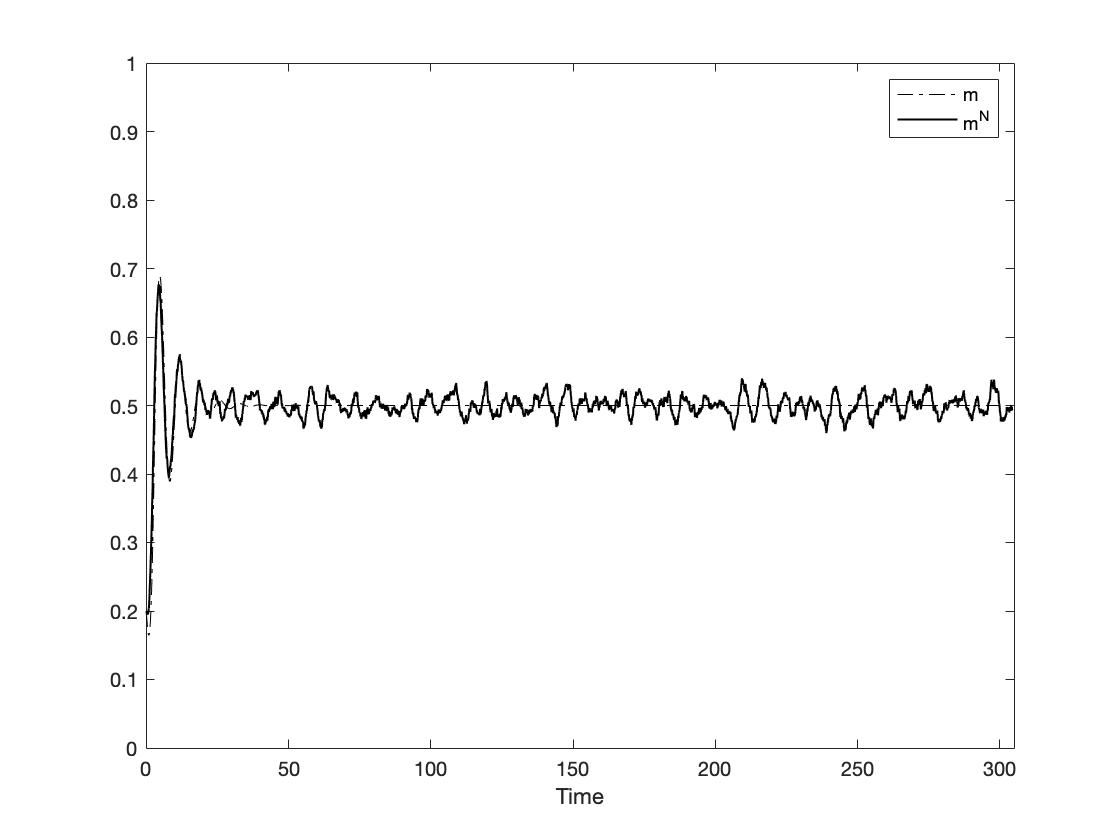}
}\\
\subfloat[][Evolution of the state of the system when $k<k^*$]{
\includegraphics[scale=0.2]{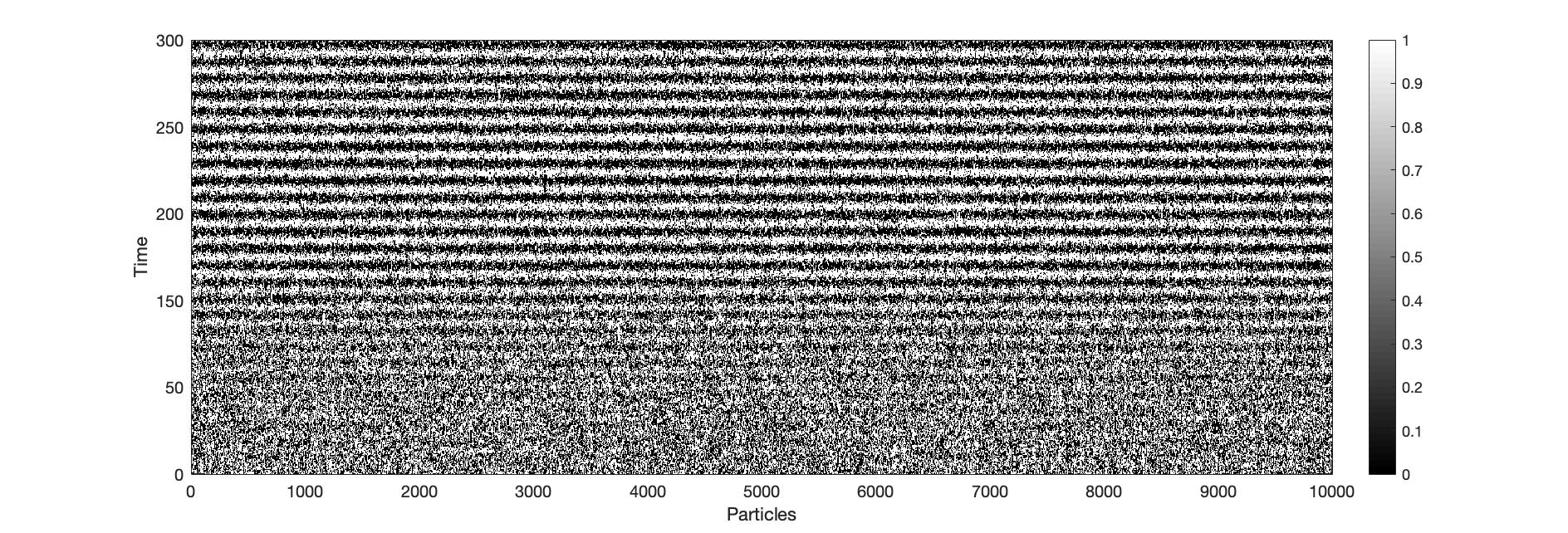}
}
\caption{Simulations for $n=2$: N=10000, $\phi(z)=-2z+3$  }\label{fig:n2macro}
\end{figure}

Now, let fix $n=2$ and consider the case of a general decreasing function $\phi$. 
According to \eqref{eq:charPolyG}, the characteristic polynomial has degree $2n+3$ and we have
\begin{equation*}
p(\lambda;k)= \lambda^7+6k\lambda^6+15k^2\lambda^5+20k^3\lambda^4+k^3(15k-2c)\lambda^3+6k^4(k-c)\lambda^2+k^5(k-6c)\lambda-2ck^6
\end{equation*}
and
\begin{eqnarray*}
 p_0(k)&=&-2ck^6,\\
 D_1(k)&=&k^5(k-6c) ,\\
 D_2(k)&=&2k^9(3k^2-6ck+16c^2),\\
 D_3(k)&=&2k^{12}(35k^3+9ck^2-18c^2k-32c^3),\\
 D_4(k)&=&4k^{15}(224k^3+276ck^3-345c^2k+128c^3), \\
 D_5(k)&=&4k^{16}(2016k^4+2988ck^3-3043c^2k^2+864c^3k-96c^4),\\
 D_6(k)&=&8k^{17}(4096k^4+6048ck^3-6036c^2k^2+1393c^3k-144c^4),\\
 \frac{d D_{6}(k)}{dk} &=& 8k^{16}(86016k^4+120960ck^3-114684c^2k^2+25074c^3k-2448).
\end{eqnarray*}

The term $D_6(k)$ is zero if $k=0$ and if $k$ solves the following equation
\begin{equation*}
4096k^4+6048ck^3-6036c^2k^2+1393c^3k-144c^4=0.
\end{equation*}
Recalling that $c<0$, the previous equation has two real solutions $k_1=r_1c$ and $k_2=r_2 c$, where $r_1<0$ and $r_2>0$ (the explicit expressions of $r_1, r_2$ can be found using a mathematical software and their numerical approximation are $r_1\approx-2.21457$ and $r_2\approx.466319$). Choosing $k^*=k_1$ the conditions $(CH1)$ and $(CH2)$ are satisfied and we have an Hopf's bifurcation. 
We have performed simulations of the macroscopic system and the microscopic one for a particular choice of $\phi$, see Figure \ref{fig:n2tanh}.  
\begin{figure}[!t]
\center
\subfloat[][$k<k^*$]{
\includegraphics[scale=0.15]{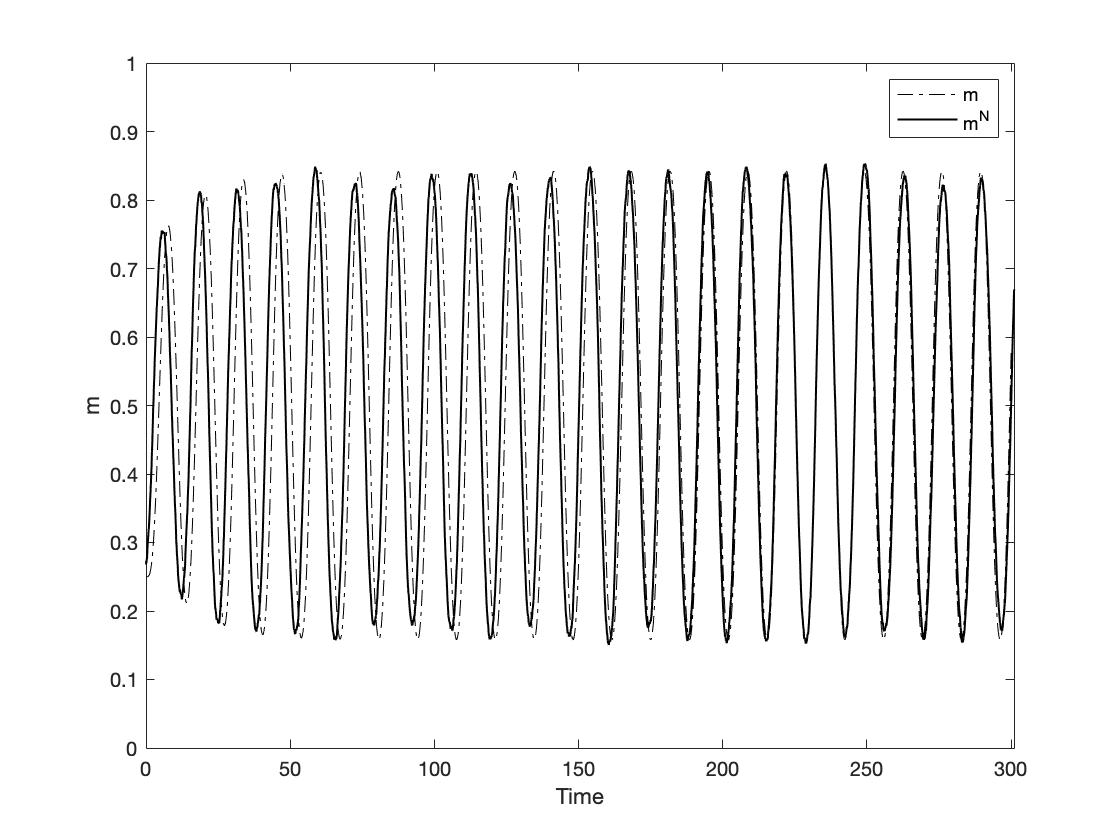}
}
\subfloat[][$k>k^*$]{
\includegraphics[scale=0.15]{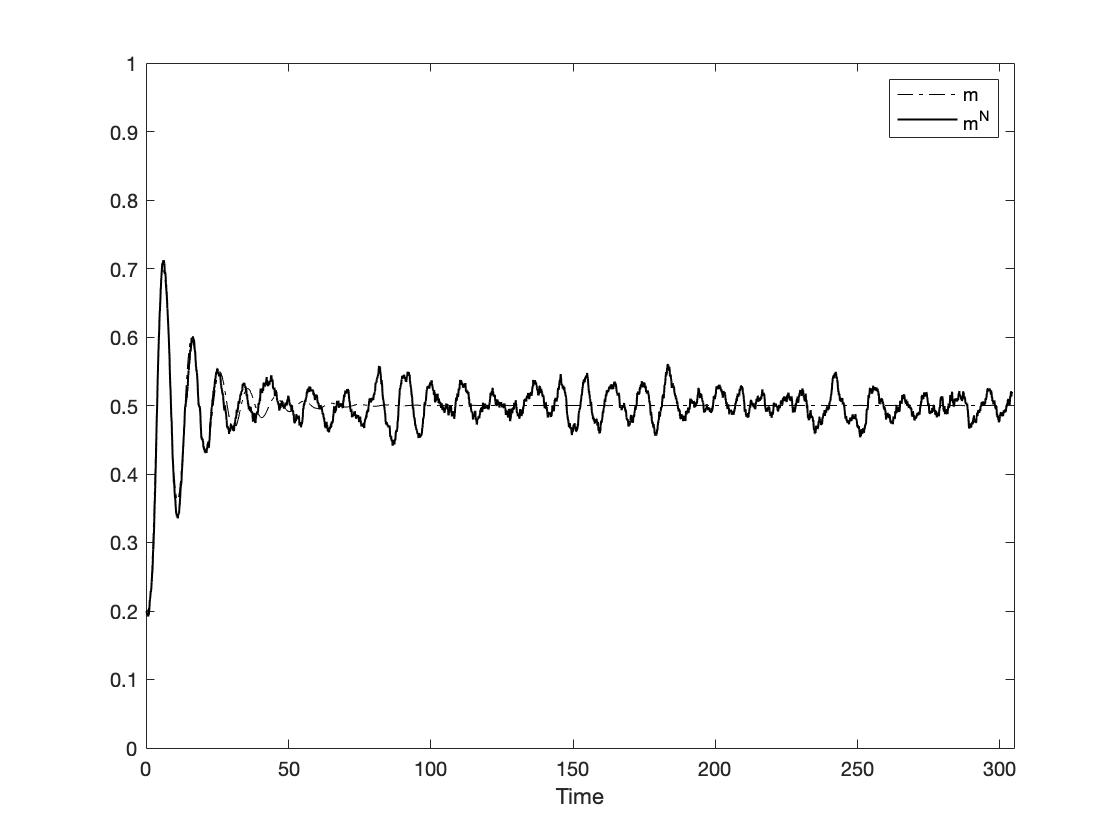}
}\\
\subfloat[][Evolution of the state of the system when $k<k^*$]{
\includegraphics[scale=0.2]{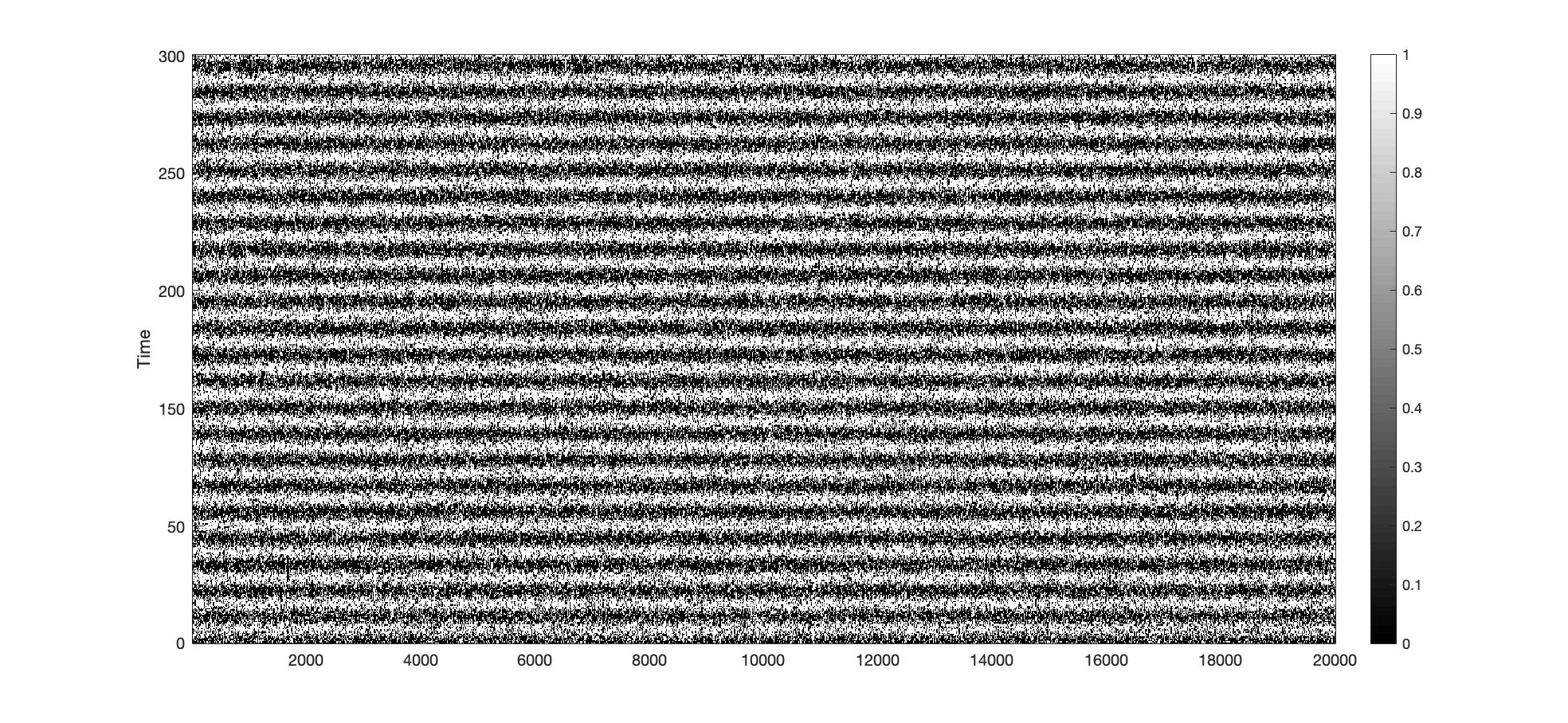}
}
\caption{Simulations for $n=2$: N=20000, $\phi(x)=-\frac{1}{2}\tanh\big(4(x-\frac{1}{2})\big)+3$  }\label{fig:n2tanh}
\end{figure}

We have thus proven the following result
\begin{prop}
Consider the system defined in \eqref{eq:macro}, where $\phi$ is a strictly decreasing function and $n, k$ are the delay parameters, 
 Let $x^*=(\frac{1}{2},\ldots,\frac{1}{2})$. Then, for $n=2$,  a supercritical Hopf bifurcation (with bifurcation parameter $k$) occurs at equilibrium point $x^*$.
\end{prop}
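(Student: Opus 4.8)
The plan is to reduce the statement to a local linear stability analysis at $x^*$ and then to apply the eigenvalue-free Hopf criterion of Theorem~\ref{teo:criterionH}. First I would observe that, since the linearization \eqref{macrogeneral} of the system \eqref{eq:macro} at $x^*$ depends on $\phi$ only through the single number $c=\phi'(\tfrac12)/4$, which is strictly negative because $\phi$ is strictly decreasing, the entire bifurcation analysis is driven by the scalar parameter $c<0$ together with the bifurcation parameter $k>0$. In particular the characteristic equation is \eqref{eq:charPolyG}, namely $(\lambda+k)^{n+1}\bigl(\lambda(\lambda+k)^{n+1}-2ck^{n+1}\bigr)=0$, and for $c<0$ the elementary sign argument that settles the increasing case fails, so one genuinely needs a criterion that does not require explicit roots.

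Second, specializing to $n=2$, the characteristic polynomial $p(\lambda;k)$ has degree $2n+3=7$. I would expand $(\lambda+k)^3\bigl(\lambda(\lambda+k)^3-2ck^3\bigr)$ to obtain the coefficients $p_0(k),\dots,p_7(k)$, form the matrices $L_j(k)$ as in \eqref{Ln} for $j=1,\dots,6$, and compute the Hurwitz-type determinants $D_j(k)=\det L_j(k)$. Each $D_j(k)$ factors as a positive power of $k$ times a residual polynomial in $k$ and $c$; since $k>0$, the sign of $D_j$ is governed entirely by this residual factor. The conditions to verify are then (CH1), i.e.\ $p_0(k^*)>0$, $D_1(k^*)>0,\dots,D_5(k^*)>0$, $D_6(k^*)=0$, and (CH2), i.e.\ $\tfrac{dD_6}{dk}(k^*)\neq0$.

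Third, to locate $k^*$ I would use that $D_6(k)$ vanishes at $k=0$ (excluded, since $k>0$) and at the roots of the quartic $4096k^4+6048ck^3-6036c^2k^2+1393c^3k-144c^4=0$. Substituting $k=rc$ turns this into a quartic in $r$ whose coefficients are pure numbers; it has exactly two real roots $r_1<0<r_2$, so the admissible positive bifurcation value is $k^*=r_1c$ (positive because $c<0$). The remaining task is the finite sign check that, at this $k^*$, the residual factors of $p_0,D_1,\dots,D_5$ are all strictly positive while $\tfrac{dD_6}{dk}(k^*)\neq0$; once this holds, Theorem~\ref{teo:criterionH} delivers the supercritical Hopf bifurcation.

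The main obstacle is purely computational: forming and expanding the six Hurwitz determinants of a $7\times7$ matrix whose entries are degree-one or degree-three polynomials in $k$ is infeasible by hand and is best delegated to a computer algebra system, as the text indicates. The genuinely delicate point is that all the positivity conditions $D_1,\dots,D_5>0$ together with the non-degeneracy $\tfrac{dD_6}{dk}\neq0$ must hold \emph{simultaneously} at the single, only numerically known root $k^*=r_1c$; verifying this joint compatibility — rather than any one inequality in isolation — is where the argument could in principle fail and is the step requiring the most care.
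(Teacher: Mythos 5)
Your proposal follows essentially the same route as the paper: linearize at $x^*$ to get the characteristic polynomial \eqref{eq:charPolyG} with $c=\phi'(\tfrac12)/4<0$, specialize to $n=2$ (degree $7$), compute the determinants $D_1,\dots,D_6$ of Theorem~\ref{teo:criterionH}, locate $k^*$ among the roots of the quartic factor $4096k^4+6048ck^3-6036c^2k^2+1393c^3k-144c^4$ via the substitution $k=rc$, and select $k^*=r_1c>0$ with $r_1<0$. This is exactly the paper's argument, including the final simultaneous sign verification of (CH1) and (CH2) at $k^*$, which the paper likewise delegates to symbolic/numerical computation.
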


\section{Discussion and conclusions}

We have defined a model of social interactions with binary choices (that can be interpreted as ''actions'', or ''opinions'')  where agents have an intrinsic attitude 
that affects the rate at which they change their choices. Such attitude is expressed in terms of a function $\phi$ of the average choice in the population, that can be interpreted as a ''gain function'' and describes the way agents feel a group pressure coming from their peers. 
We distinguish between populations of conformist (cooperative) or contrarian (competitive) agents, secondly if the function 
$\phi$ is strictly increasing or decreasing, even if more complex situations may be considered, where agents' attitude changes with the pressure's intensity.  
At the same time, agents feel the effect of social influence, i.e., they have a tendency to adjust their choices to the ones of agents with whom they have a direct interaction. 
At a microscopic level, where almost sure convergence to consensus occurs, 
a contrarian attitude of the agents acts as a noise which breaks the order and produces a disordered metastable phase, where agents continuously change their choices and  there is not a prevailing choice in the community. 
The introduction of delay in the  dynamics can be interpreted as a further rise of perturbation that, however, may help create a new ordered phase, where agents coordinate in a periodic behaviour. \\
Further developments of this model could include the presence of mass media, or the subdivision of the community into groups of agents with different gain functions,  
 in order to study phenomena like polarization or fragmentation of choices.\\
Non linear mean field voter models can be considered as a first level attempt in defining non  trivial dynamics in a social network of interacting agents. The model analysed in this paper, while constituting an oversimplified description of human relationships, when observed at a large scale, manages to illustrate some of the typical patterns that are observed in social communities. 
On the other hand, questions and issues coming from the context of social sciences may be a source of inspiration to design challenging mathematical problems.

\appendix

\section{ Explicit formulas in the linear case}\label{appendixA}
We give explicit formulas for the critical parameter $k^*$ at which  
 the system has supercritical Hopf bifurcation in the cases when $\phi$ is linear and $n=3,4,5$. The values of $k^*$ are obtained by checking that the conditions 
 of Theorem \ref{teo:criterionH} re satisfied. 
 We also show simulations of the microscopic model, which confirm what is theoretically expected.\\ 
 
 For $n=3$  according to formula \eqref{eq:charpoly} we obtain
\begin{equation*}
p(\lambda;k)=\lambda^5+4k\lambda^4+6k^2\lambda^3+4k^3\lambda^2+k^4\lambda+\frac{a}{2}k^4.
\end{equation*}
Then
\begin{eqnarray*}
 p_0(k)&=&\frac{a}{2}k^4,\\
 D_1(k)&=&k^4,\\
 D_2(k)&=&k^6(4k-3a),\\
 D_3(k)&=&\frac{5}{2} k^8(8k-7a),\\
 D_4(k)&=&\frac{1}{4} k^8(256k^2-224ak-a^2), \\
 \frac{d D_{4}(k)}{dk} &=& 2k^8(320k-261a). 
\end{eqnarray*}

Taking $k^*=\frac{1}{16}(5\sqrt{2}+7) a$, conditions $(CH1)$ and $(CH2)$ are satisfied and we have an Hopf's bifurcation, see Figure \ref{fig:n3macro}. \\

\begin{figure}[!h]
\center
\subfloat[][$k<k^*$]{
\includegraphics[scale=0.15]{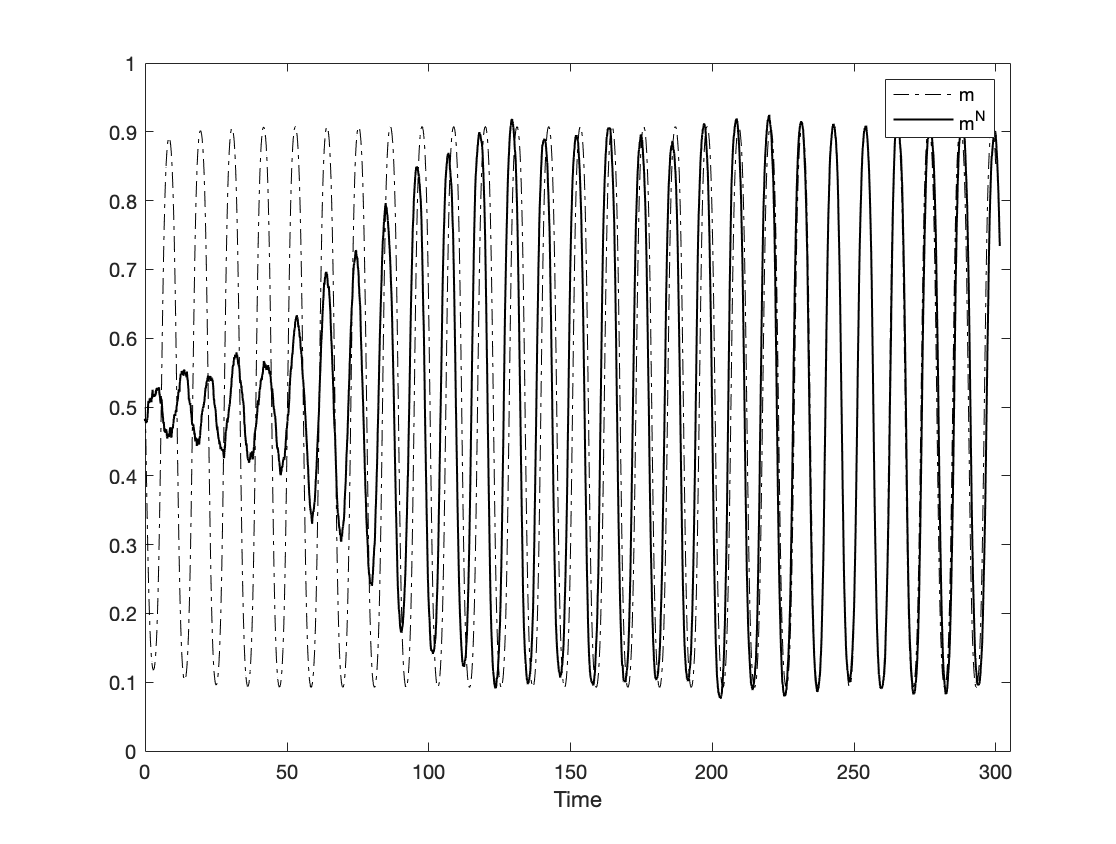}
}
\subfloat[][$k>k^*$]{
\includegraphics[scale=0.15]{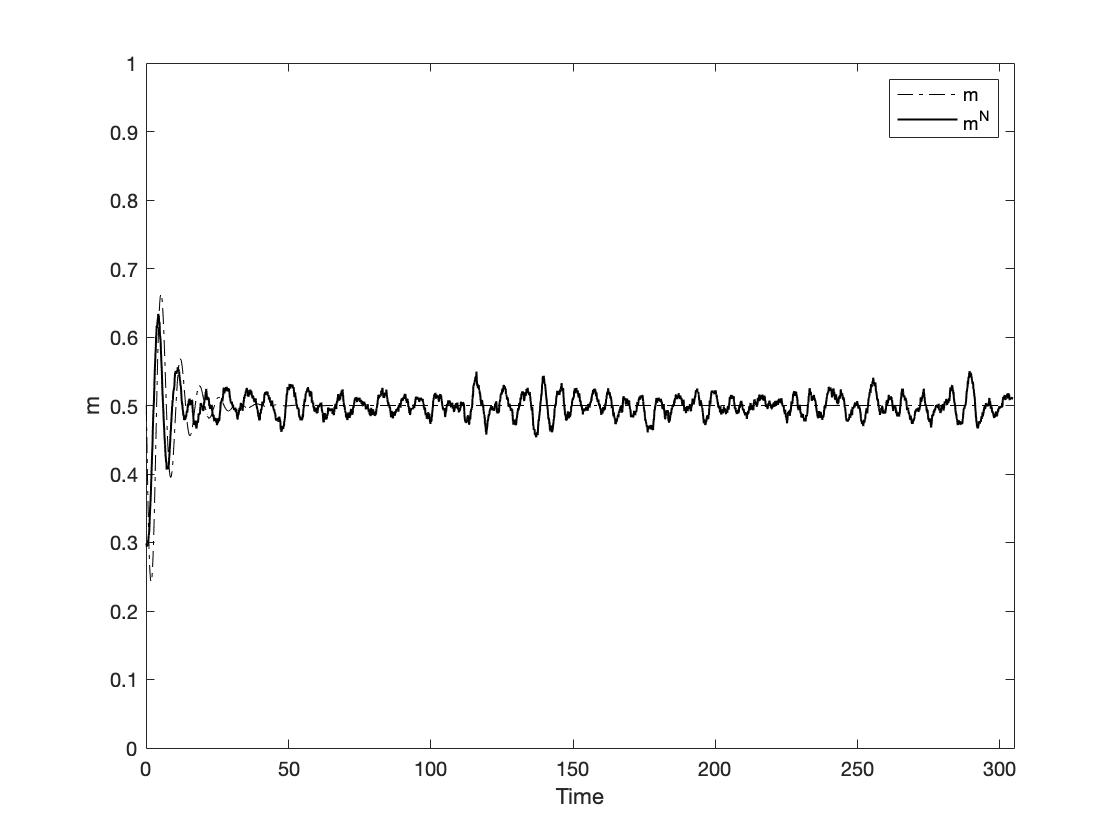}
}\\
\subfloat[][Evolution of the state of the system when $k<k^*$]{
\includegraphics[scale=0.2]{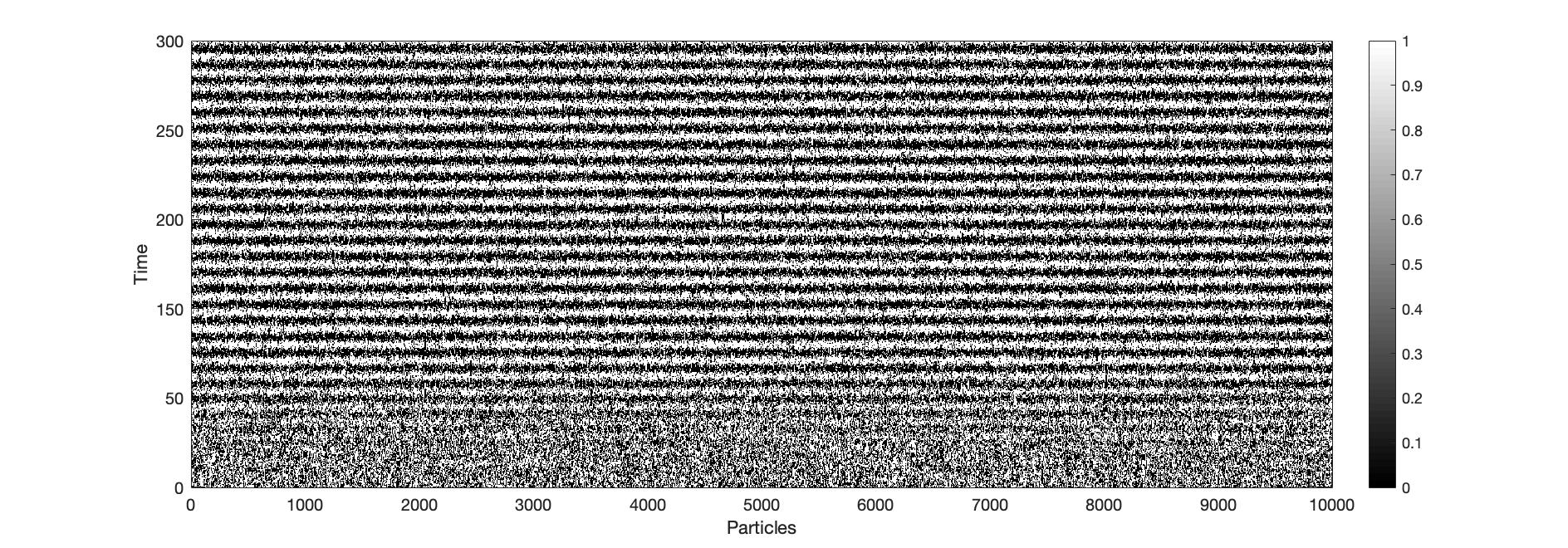}
}
\caption{Simulations for $n=3$: N=10000, $\phi(z)=-2z+3$  }\label{fig:n3macro}
\end{figure}

For $n=4$ we have
\begin{equation*}
p(\lambda;k)=\lambda^6+5 k \lambda^5+10 k^2 \lambda^4+10 k^3 \lambda^3+5 k^4 \lambda^2+k^5 \lambda -2 c k^5.
\end{equation*}
Then
\begin{eqnarray*}
 p_0(k)&=&\frac{a}{2}k^6,\\
 D_1(k)&=&k^5,\\
 D_2(k)&=&5k^8(k-a) ,\\
 D_3(k)&=&\frac{5}{2}k^{11}(16k-19a)\\
 D_4(k)&=&\frac{5}{4}k^{12}(224k^2-264ak-5a^2),\\
 D_5(k)&=&\frac{1}{4}k^{13}( 4096k^2-4800ak-125a^2), \\
 \frac{d D_{5}(k)}{dk} &=&\frac{5}{4} k^{12} (12288 k^2-13440a k-325 a^2) .
\end{eqnarray*}
Taking $k^* = \frac{5}{128} (7\sqrt{5} + 15 )a$ conditions $(CH1)$ and $(CH2)$ are satisfied and we have an Hopf's bifurcation. \\

Finally, for $n=5$ we get
\begin{equation*}
p(\lambda;k)=\lambda^7+6k\lambda^6+15k^2\lambda^5+20 k^3\lambda^4+20 k^3\lambda^4+15 k^4 \lambda^3+6 k^5 \lambda^2+k^6 \lambda-2 c k^6.
\end{equation*}
Then
\begin{eqnarray*}
 p_0(k)&=&\frac{a}{2}k^6,\\
 D_1(k)&=&k^6,\\
 D_2(k)&=&\frac{3}{2} k^{10}(4k-5a) ,\\
 D_3(k)&=&35k^{14} (2k-3a)\\
 D_4(k)&=&\frac{7}{2}k^{16}(256 k^2- 376 a k-15 a^2),\\
 D_5(k)&=&\frac{7}{2} k^{18}   2304 k^2- 3354 a k -209 a^2 ), \\
 D_6(k)&=&\frac{1}{8} k^{18}(4k+a)(16384k^2-24832ak+a^2),\\
 \frac{d D_{6}(k)}{dk} &=&\frac{3}{4} k^{17}  (917504 k^3 - 1269760 a k^2 - 78584 a^2 k +3 a^3   ) .
\end{eqnarray*}
Taking $k^* = \frac{1}{128} (56\sqrt{3} + 97 )a$ conditions $(CH1)$ and $(CH2)$ are satisfied and we have an Hopf's bifurcation, see Figure  \ref{fig:n5macro}.\\

\begin{figure}[!h]
\center
\subfloat[][$k<k^*$]{
\includegraphics[scale=0.15]{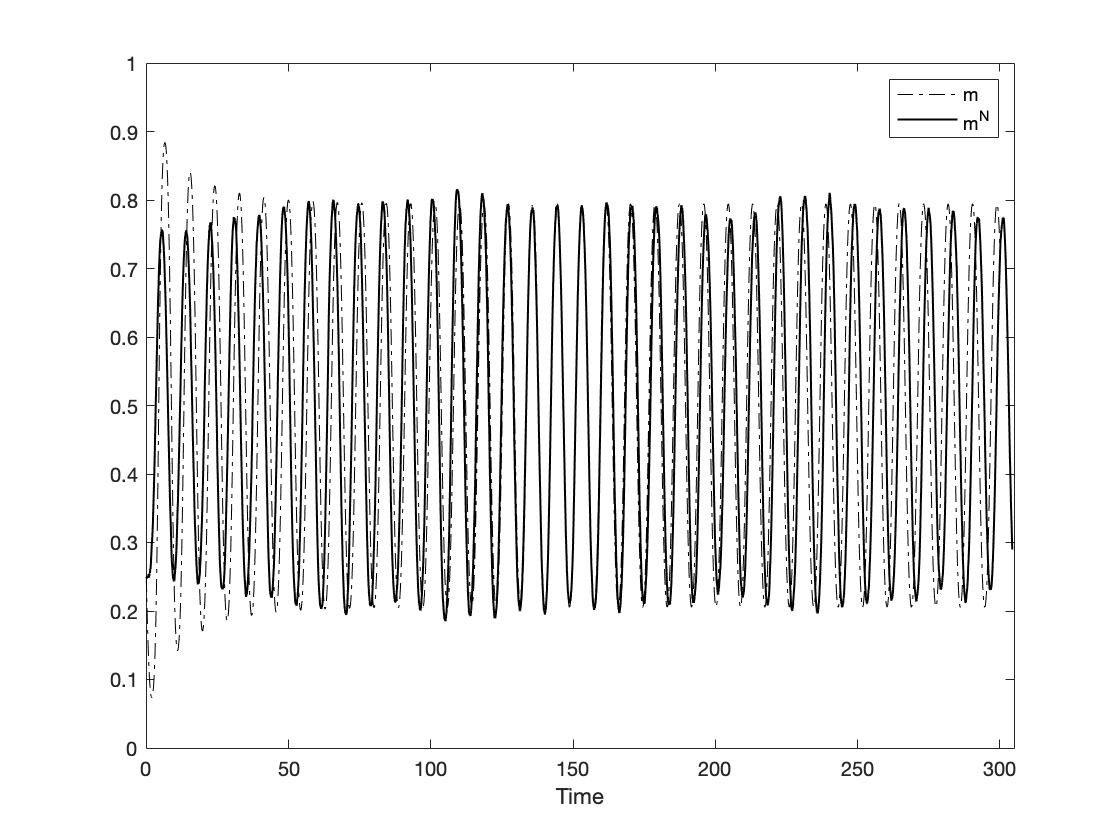}
}
\subfloat[][$k>k^*$]{
\includegraphics[scale=0.15]{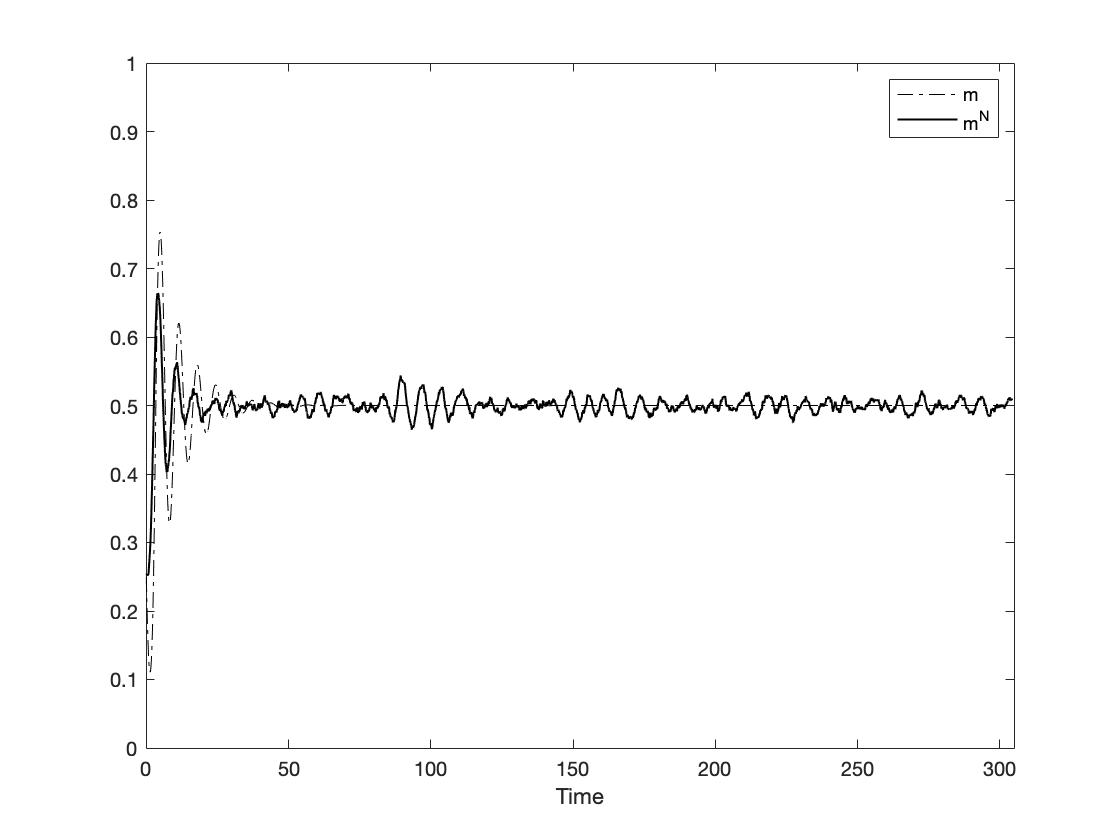}
}\\
\subfloat[][Evolution of the state of the system when $k<k^*$]{
\includegraphics[scale=0.2]{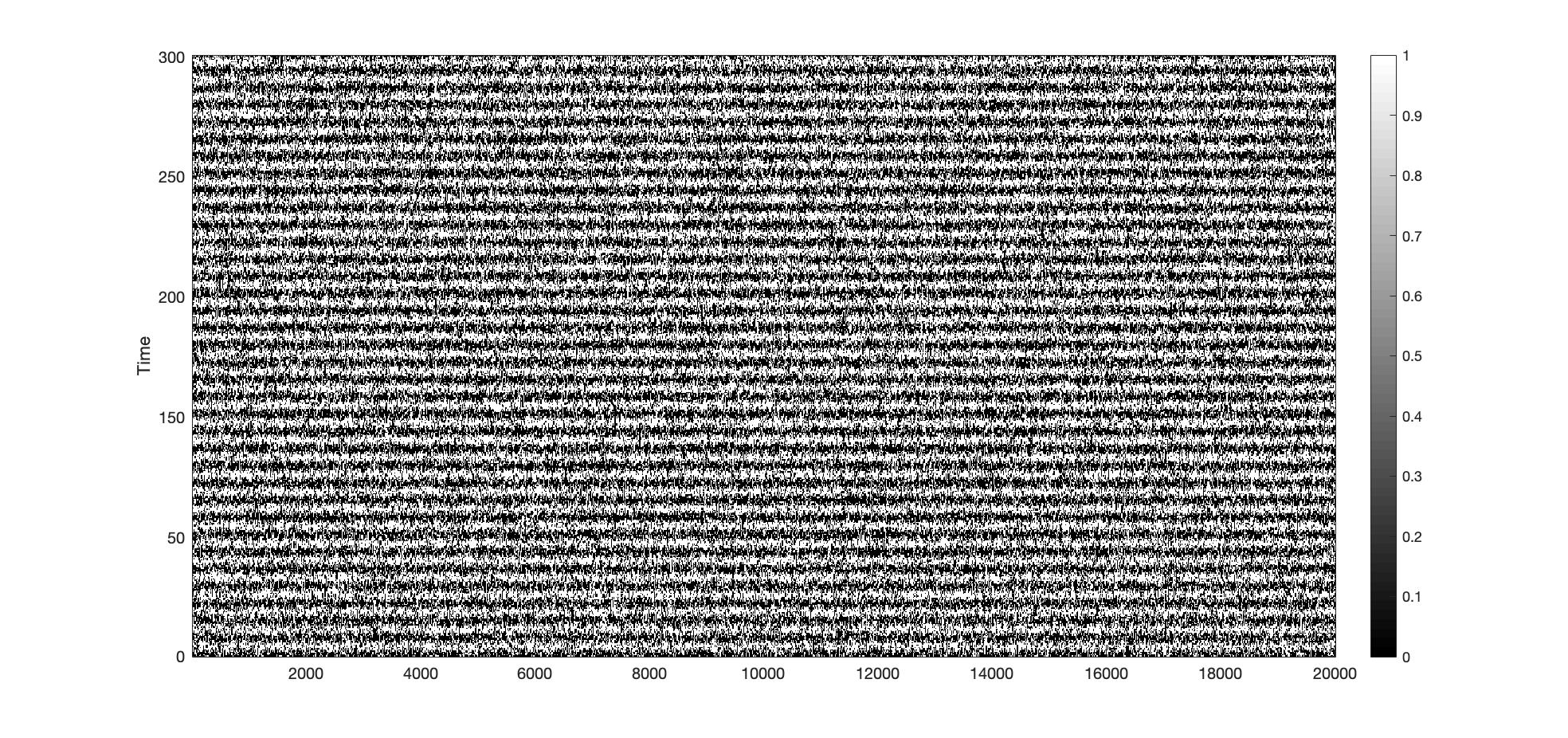}
}
\caption{Simulations for $n=5$: N=20000, $\phi(z)=-2z+3$  }\label{fig:n5macro}
\end{figure}

\section{Auxiliary results}
The general statement of Kurtz's Theorem can be found in  \cite{kurtz70}. Here we write its specialized version to the case of a birth and death process with values in $E_N=\{\frac{i}{N}: 0\leq i\leq N\}$, as done in \cite{fagnani2016proc}.

\begin{theorem}[Kurtz's Theorem]\label{teo:kurtz}
Let $\{\bm{X}(t)\}_{t\geq0}$ be a birth and death process on the state-space $E_N$ with transitions rates, respectively, $r^+(x)=N f^+(x)$ and $r^-(x)=Nf^-(x)$ where $f^+$ and $f^-$ are Lipschitz continuous functions with constants $L^+$, $L^-$. Suppose that $\bm{X}(0) = x_0$ deterministically. Consider the Cauchy problem:
\begin{equation*}
\begin{cases}
\dot{x}(t)=f^+(x)-f^-(x);\\
x(0)=x_0.
\end{cases}
\end{equation*}

Then for any $\epsilon>0$ and $T>0$, for $N $ sufficiently large,
\begin{equation*}
P(\sup_{0\leq t\leq T}|\bm{X}(t)-x(t)|>\epsilon)\leq 4 \exp\left(-NTdg\left(\frac{\epsilon e^{-\bar{L}T}}{4T\bar{f}}\right) \right)
\end{equation*}
where $d=\|f^+-f^-\|_{\infty}$, $g(t)=(1+t)\log(1+t)-t$ and $\bar{L}=\max\{L^+,L^-\}$
\end{theorem}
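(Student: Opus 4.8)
The plan is to realize the birth–death process through Kurtz's random time–change representation and then separate the Poisson fluctuations from a deterministic Lipschitz drift that is controlled by Gronwall's inequality. Concretely, let $Y^+,Y^-$ be two independent unit–rate Poisson processes and write
$$\bm{X}(t)=x_0+\frac1N Y^+\!\Big(N\!\int_0^t f^+(\bm{X}(s))\,\de s\Big)-\frac1N Y^-\!\Big(N\!\int_0^t f^-(\bm{X}(s))\,\de s\Big),$$
so that each up/down jump of size $1/N$ occurs at rate $Nf^\pm$. The Cauchy problem satisfies $x(t)=x_0+\int_0^t[f^+(x(s))-f^-(x(s))]\,\de s$. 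Subtracting and centering the Poisson processes via $\tilde Y^\pm(u):=Y^\pm(u)-u$ decomposes $\bm{X}(t)-x(t)$ into two noise terms $\frac1N\tilde Y^\pm\big(\int_0^t Nf^\pm(\bm X(s))\,\de s\big)$ and two drift terms $\int_0^t[f^\pm(\bm X(s))-f^\pm(x(s))]\,\de s$.

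First I would control the drift terms: by Lipschitz continuity each is bounded in absolute value by $\bar L\int_0^t|\bm X(s)-x(s)|\,\de s$ with $\bar L=\max\{L^+,L^-\}$. Next I would control the noise terms uniformly in $t$. Since $0\le f^\pm\le\bar f$ with $\bar f:=\max\{\|f^+\|_\infty,\|f^-\|_\infty\}$, the random time argument $\int_0^t Nf^\pm(\bm X(s))\,\de s$ never exceeds $N\bar f T$ on $[0,T]$, so it suffices to bound $\frac1N\sup_{0\le u\le N\bar f T}|\tilde Y^\pm(u)|$. For this I would invoke the exponential maximal inequality for a centered unit–rate Poisson process: applying Doob's inequality to the exponential supermartingale $\exp\big(\theta Y(u)-u(e^\theta-1)\big)$ and optimizing over $\theta$ yields, for every $a>0$,
$$P\Big(\sup_{0\le u\le U}\big(Y(u)-u\big)>a\Big)\le \exp\!\Big(-U\,g\big(a/U\big)\Big),\qquad g(t)=(1+t)\log(1+t)-t,$$
with the analogous two–sided bound obtained by applying the same argument to $-Y$. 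This is precisely the step that produces the rate function $g$ appearing in the statement.

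Then I would combine the two estimates. Fix $\delta>0$ and work on the event $\mathcal E$ that all four centered–Poisson contributions are at most $\delta$ in absolute value over $[0,T]$; on $\mathcal E$ the decomposition and the Lipschitz bound give, writing $M(t)=\sup_{0\le s\le t}|\bm X(s)-x(s)|$,
$$M(t)\le 2\delta+\bar L\int_0^t M(s)\,\de s,$$
so Gronwall's inequality yields $M(T)\le 2\delta e^{\bar L T}$. Choosing $\delta$ with $2\delta e^{\bar L T}=\epsilon$, the deviation event $\{\sup_{t\le T}|\bm X(t)-x(t)|>\epsilon\}$ is contained in the complement of $\mathcal E$; a union bound over the four Poisson contributions (two clocks, each two–sided, which is what produces the prefactor $4$) together with the maximal inequality above, evaluated at $U=N\bar f T$ and $a=N\delta$, gives a bound of the stated form. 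Rewriting $U\,g(a/U)=N\bar f T\,g\big(\delta/(\bar f T)\big)$ in the normalization used in the statement (with $d=\|f^+-f^-\|_\infty$ in front of $g$ and argument $\epsilon e^{-\bar L T}/(4T\bar f)$) is then routine bookkeeping of constants.

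The main obstacle is the interaction between the \emph{random} time change and the maximal inequality: because the Poisson clocks are run along the state–dependent integrals $\int_0^t Nf^\pm(\bm X(s))\,\de s$, one cannot apply a pointwise Chernoff bound at a fixed time but must control the supremum of the fluctuation over the whole, a priori random, time interval. The clean resolution is to dominate this random interval by the deterministic $[0,N\bar f T]$ using $f^\pm\le\bar f$ and then use the sup–version (maximal) exponential inequality; this also dissolves the apparent circularity whereby the Lipschitz control of the drift seems to require the very closeness of $\bm X$ to $x$ that one is trying to prove, since on $\mathcal E$ the Gronwall estimate closes unconditionally.
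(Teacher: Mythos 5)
The paper does not actually prove this statement: Theorem \ref{teo:kurtz} is imported verbatim as an auxiliary result, with the proof delegated to \cite{kurtz70} and to the specialized derivation in \cite{fagnani2016proc}. So there is no in-paper argument to compare against; what can be assessed is whether your blind proof is a sound derivation of a bound of this form, and it essentially is. Your route --- Kurtz's random time-change representation with two independent unit-rate Poisson clocks, centering $\tilde Y^{\pm}(u)=Y^{\pm}(u)-u$, dominating the random clock arguments by the deterministic interval $[0,N\bar f T]$, applying Doob's inequality to the exponential supermartingale $\exp\big(\theta Y(u)-u(e^{\theta}-1)\big)$ to get $P\big(\sup_{u\le U}(Y(u)-u)>a\big)\le\exp\big(-U g(a/U)\big)$, and closing with Gronwall --- is exactly the standard proof of this large-deviation version of Kurtz's theorem, and it correctly accounts for both the rate function $g(t)=(1+t)\log(1+t)-t$ and the prefactor $4$ (two clocks, two tails). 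You also correctly identify and resolve the only genuinely delicate point, namely that the maximal (rather than fixed-time) inequality is needed because the Poisson clocks run on state-dependent times, and that Gronwall closes unconditionally on the good event so there is no circularity.

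The one place where you are too quick is the final ``routine bookkeeping of constants.'' Your decomposition has \emph{two} drift terms, each Lipschitz-bounded by $\bar L\int_0^t M(s)\,\de s$, so Gronwall gives $M(T)\le 2\delta e^{2\bar L T}$, not $2\delta e^{\bar L T}$; correspondingly your exponent comes out as $N\bar f T\, g\big(\epsilon e^{-2\bar L T}/(2\bar f T)\big)$, whereas the displayed bound has $NTd\,g\big(\epsilon e^{-\bar L T}/(4T\bar f)\big)$ with $d=\|f^{+}-f^{-}\|_{\infty}$ multiplying $g$. These two expressions do not reduce to one another by monotonicity alone (note $Ug(a/U)$ is decreasing in $U$, and $d$ may be smaller than $\bar f$), so reproducing the paper's exact constants would require following the specific normalization used in \cite{fagnani2016proc} rather than generic bookkeeping. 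This does not affect the substance: your argument proves an estimate of the same exponential-in-$N$ form, which is all the paper uses in the proof of Theorem \ref{teo: time stable}.
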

 
For the proof of the following lemmas, see  \cite{fagnani2016proc}.
\begin{lemma}\label{lem: BD zero}
For $N\geq 1$, let $\{\bm{X}(t)\}_{t\geq0}$ be a birth and death process on $E_N=\{\frac{i}{N}: 0\leq i\leq N\}$ with rates $r^+(x)$ and $r^-(x)$ respectively. Assume $r^+(0)=r^-(0)=0$ and that there exists $\bar{\epsilon}>0$ such that 
\begin{equation*}
r^-(x)\geq(1+\delta)r^+(x),\quad \forall\ x\in E_N\cap(0,2\bar{\epsilon}]\\
\end{equation*}
for some $\delta>0$. Then, letting $C=\ln (1+\delta)$ and $P_x=P(\ \cdot\  | \bm{X}(0)=x)$, for any $x<\bar{\epsilon}$ we have 
\begin{equation}
P_x\big(\sup_{t\geq 0}\bm{X}(t)>2\bar{\epsilon}\big)<\bar{\epsilon} Ne^{-\bar{\epsilon} C N}.
\end{equation}
\end{lemma}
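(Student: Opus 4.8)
The plan is to reduce the continuous-time statement to a gambler's-ruin computation for the embedded jump chain, and then bound the ruin probability by an explicit harmonic (scale) function that exploits the downward bias $r^-(x)\ge(1+\delta)r^+(x)$.

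First I would observe that the event $\{\sup_{t\ge0}\bm{X}(t)>2\bar\epsilon\}$ depends only on the set of sites visited, not on the holding times, so it suffices to analyse the discrete-time embedded chain $\{Y_n\}$ on the lattice $\{0,1,\dots,N\}$ (identifying state $k$ with $x=k/N$). Since $r^+(0)=r^-(0)=0$, the origin is absorbing; at an interior state $k$ the chain moves up with probability $p_k=\frac{r^+(k/N)}{r^+(k/N)+r^-(k/N)}$ and down with $q_k=1-p_k$, so that $q_k/p_k=r^-(k/N)/r^+(k/N)\ge 1+\delta$ for every $k$ with $k/N\in(0,2\bar\epsilon]$. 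Let $M$ be the smallest integer with $M/N>2\bar\epsilon$ and set $k_0=Nx$. Because the walk is absorbed at $0$ and moves by unit steps, the event $\{\sup_t\bm{X}(t)>2\bar\epsilon\}$ coincides exactly with $\{\tau_M<\tau_0\}$, the event that the embedded chain hits $M$ before $0$, and all transition ratios used along the way (at sites $1,\dots,M-1$, all lying in $(0,2\bar\epsilon]$) satisfy the $(1+\delta)$ bound.

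Next I would introduce the scale function $h(k)=\sum_{j=0}^{k-1}\gamma_j$ with $\gamma_0=1$ and $\gamma_j=\prod_{i=1}^{j}\frac{q_i}{p_i}$. The identity $p_k\gamma_k=q_k\gamma_{k-1}$ makes $h(Y_{n\wedge\tau})$ a martingale on $\{0,\dots,M\}$, so optional stopping yields the classical gambler's-ruin formula $P_{k_0}(\tau_M<\tau_0)=h(k_0)/h(M)$. To estimate this ratio I would use the monotonicity of $\gamma_j$ (each factor $q_i/p_i\ge 1+\delta>1$): bounding the numerator by its largest term, $h(k_0)\le k_0\gamma_{k_0-1}$, and the denominator from below by its last term, $h(M)\ge\gamma_{M-1}$, which gives
\begin{equation*}
P_{k_0}(\tau_M<\tau_0)\le k_0\,\frac{\gamma_{k_0-1}}{\gamma_{M-1}}=k_0\prod_{i=k_0}^{M-1}\frac{p_i}{q_i}\le k_0\,(1+\delta)^{-(M-k_0)}.
\end{equation*}

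Finally I would insert the ranges $k_0=Nx<\bar\epsilon N$ and $M-k_0>2\bar\epsilon N-\bar\epsilon N=\bar\epsilon N$, together with $C=\ln(1+\delta)$, to conclude
\begin{equation*}
P_x\big(\sup_{t\ge0}\bm{X}(t)>2\bar\epsilon\big)\le k_0(1+\delta)^{-(M-k_0)}<\bar\epsilon N\,e^{-\bar\epsilon C N},
\end{equation*}
as claimed. The computation is routine once the setup is fixed; the only genuinely delicate points are (i) justifying that the supremum event reduces exactly to the embedded ruin event $\{\tau_M<\tau_0\}$, which rests on $0$ being absorbing and on unit-step moves, and (ii) the rounding in the definition of $M$, which is harmless since both the hypothesis $x<\bar\epsilon$ and the target bound are strict. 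I expect the main obstacle to be phrasing (i) cleanly rather than any analytic difficulty.
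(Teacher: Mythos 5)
Your argument is correct: the reduction to the embedded jump chain, the scale-function/gambler's-ruin identity $P_{k_0}(\tau_M<\tau_0)=h(k_0)/h(M)$, and the bounds $h(k_0)\le k_0\gamma_{k_0-1}$, $h(M)\ge\gamma_{M-1}$ yield exactly the stated estimate $\bar{\epsilon}N e^{-\bar{\epsilon}CN}$ with $C=\ln(1+\delta)$, including the strictness of the inequality. The paper does not prove this lemma itself but defers to \cite{fagnani2016proc}, and your computation is precisely the standard ruin argument that produces a bound of this form, so the only (harmless) point left implicit is that you assume $r^+ + r^->0$ at the interior sites $1,\dots,M-1$ so that the embedded chain and the optional-stopping step are well defined; if the total rate vanishes at such a site the process is trapped below $2\bar{\epsilon}$ and the bound holds trivially.
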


\begin{lemma}\label{lem: BD unmezzo}
For $N\geq 1$, let $\{\bm{X}(t)\}_{t\geq 0}$ be a birth and death process on $E_N=\{\frac{i}{N}: 0\leq i\leq N\}$ with rates $r^+(x)$ and $r^-(x)$ respectively. Let $\mu=\max_{x\in E_N}\big(r^+(x)+r^-(x)\big)$. Assume that there exists $x_0\in (0,1)$ and $\bar{\epsilon}>0$ such that $(x_0-\bar{\epsilon}, x_0+\bar{\epsilon})\subset (0,1)$ and  
\begin{equation*}
r^+(x)\geq(1+\delta)r^-(x)\quad\mbox{if }x\in E_N\cap(x_0-\bar{\epsilon},x_0+\bar{\epsilon})\\
\end{equation*}
for some $\delta>0$ and let $p=\frac{1+\delta}{2+\delta}$. Then, for any $x>x_0$ 
\begin{equation*}
P_{x}\left(\inf_{ t\in[0, e^{\bar{\epsilon} C_p N}]} \bm{X}(t)< x_0-\bar{\epsilon}\right)<\left(9\mu^2 +\frac{1}{9 \mu^2}\right) e^{-\bar{\epsilon} C_p N}
\end{equation*}
for a suitable constants $C_p$ which depends only on $p$. 
\end{lemma}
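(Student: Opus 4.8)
The plan is to reduce the claim to a gambler's-ruin estimate for the embedded jump chain and then to control, over the exponentially long horizon $T=e^{\bar\epsilon C_pN}$, how many times the process started at $x>x_0$ can attempt to cross the window $W=(x_0-\bar\epsilon,\,x_0+\bar\epsilon)$ downward. The key observation is that the event $\{\inf_{t\le T}\bm{X}(t)<x_0-\bar\epsilon\}$ can occur only if the process traverses the whole of $W$, from its top $x_0+\bar\epsilon$ down to its bottom $x_0-\bar\epsilon$; since inside $W$ the hypothesis $r^+\ge(1+\delta)r^-$ forces an upward drift, each such traversal is exponentially unlikely in $N$, and one then has to show that the exponentially many possible attempts do not undo this.

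First I would analyse the embedded (jump) chain on the lattice $E_N\cap W$. Conditionally on a jump at a state $x\in W$, the probability it is a birth is $r^+(x)/(r^+(x)+r^-(x))\ge(1+\delta)/(2+\delta)=p>\tfrac12$, so the jump chain in $W$ is comparable to the homogeneous $p$-biased nearest-neighbour walk on $\{0,\dots,M\}$ with $M=\lfloor2\bar\epsilon N\rfloor$, the endpoints $0$ and $M$ representing the bottom and top of $W$. Because the up-probability is at least $p$ throughout $W$, the probability of hitting the bottom before the top, started at height $j$, is bounded above by the gambler's-ruin probability of the constant-$p$ walk,
\begin{equation*}
\frac{(q/p)^{j}-(q/p)^{M}}{1-(q/p)^{M}},\qquad q=1-p,\quad \frac{q}{p}=\frac{1}{1+\delta}.
\end{equation*}
Since $x>x_0$ forces the starting height to be at least $\bar\epsilon N$ above the bottom, the probability of the \emph{first} downward crossing is at most a constant times $(q/p)^{\bar\epsilon N}=e^{-\bar\epsilon N\ln(1+\delta)}$, whereas every later attempt, necessarily starting from the top, has probability at most $(q/p)^{2\bar\epsilon N}$. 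This already singles out the constant $C_p=\ln\frac{p}{1-p}=\ln(1+\delta)$, which depends only on $p$.

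Next I would bound the number of attempts available in $[0,T]$. As the total jump rate never exceeds $\mu=\max_x(r^+(x)+r^-(x))$, the number $J$ of jumps up to time $T$ is stochastically dominated by a Poisson variable of mean $\mu T$, so by a Chernoff bound $P(J>2\mu T)$ is super-exponentially small in $N$, and on its complement the number of visits to the top level, hence the number of downward attempts, is at most $2\mu T$. Applying the strong Markov property at each such visit and a union bound yields
\begin{equation*}
P_x\Big(\inf_{t\in[0,T]}\bm{X}(t)<x_0-\bar\epsilon\Big)\le C\,(q/p)^{\bar\epsilon N}+2\mu T\,(q/p)^{2\bar\epsilon N}+P(J>2\mu T).
\end{equation*}
Substituting $T=e^{\bar\epsilon C_pN}$ with $C_p=\ln(1+\delta)$ collapses the exponential in the middle term to $e^{-\bar\epsilon C_pN}$, so the right-hand side is of the form $(\text{prefactor})\,e^{-\bar\epsilon C_pN}$; a fully explicit Poisson large-deviation estimate for $J$, together with the precise optimisation of the split between the two terms, is what produces the stated prefactor $9\mu^2+\tfrac1{9\mu^2}$.

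The step I expect to be the main obstacle is exactly this trade-off, which is what distinguishes the present statement from Lemma~\ref{lem: BD zero}: there the drift condition persists all the way down to an absorbing boundary, so one one-sided gambler's-ruin estimate valid for all time suffices, whereas here the drift is guaranteed \emph{only} inside $W$. Above $x_0+\bar\epsilon$ the process is uncontrolled and may return to attempt the crossing repeatedly, so one must balance the exponentially small single-crossing probability $e^{-2\bar\epsilon N\ln(1+\delta)}$ against the order-$\mu T$ attempts. This is precisely what forces $C_p\le\ln(1+\delta)$ and hence the finite horizon $e^{\bar\epsilon C_pN}$; verifying that the union bound still decays while tracking the dependence on $\mu$ through the Poisson tail is the delicate part of the argument.
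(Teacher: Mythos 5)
The paper does not prove this lemma itself but defers to \cite{fagnani2016proc}; your argument --- a gambler's-ruin bound for the embedded chain inside the window $(x_0-\bar\epsilon,x_0+\bar\epsilon)$, comparison with the homogeneous $p$-biased walk, and a count of the crossing attempts over the horizon $e^{\bar\epsilon C_pN}$ via Poisson domination of the number of jumps by rate $\mu$ --- is exactly the standard route taken there, and it is sound. The only loose ends are quantitative: with $C_p=\ln(1+\delta)$ the first-attempt estimate carries a $p$-dependent prefactor of order $\frac{1+\delta}{\delta}$, so to arrive at the stated $p$-free prefactor $9\mu^2+\frac{1}{9\mu^2}$ one must take $C_p$ strictly below $\ln(1+\delta)$ (which the statement's ``suitable constant $C_p$'' permits), and the $\frac{1}{9\mu^2}$ term exists precisely to make the bound trivially true in the small-$\mu$ regime where the Poisson tail estimate degenerates --- a case you do not discuss but which costs nothing.
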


\paragraph{Acknowledgment:} The authors want to thank Prof. Marco Formentin for a useful discussion about numerical simulations. 

\bibliography{bibliografy}
\bibliographystyle{amsplain}

\end{document}